\sloppy\allowdisplaybreaks[4]
\def\dbR{\mathbb{R}}
  \def\cX{{\cal X}}  
  \def\cY{{\cal Y}}  
             \def\hb{\hbox}
\def\ms{\medskip}              
        \def\lan{\langle}    
   \def\ran{\rangle}    
\def\no{\noindent}          
\def\nn{\nonumber}         
\def\rf{\eqref}            
\def\cd{\cdot}             
     \def\({\Big (}       \def\ba{\begin{aligned}}
\def\les{\leqslant}      \def\){\Big )}       \def\ea{\end{aligned}}
\def\ges{\geqslant}      \def\[{\Big[}        \def\bel{\begin{equation}\label}
          \def\]{\Big]}        \def\ee{\end{equation}}
      \def\q{\quad}        
\def\h{\widehat}         \def\qq{\qquad}      
\def\a{\alpha}  \def\G{\Gamma}   \def\g{\gamma}     
\def\b{\beta}   \def\D{\Delta}   \def\d{\delta}        \def\p{\phi}
\def\f{\varphi}   \def\l{\lambda}        \def\e{\varepsilon}
\def\t{\tau}          
\newtheoremstyle{thry}
{}      
{}      
{\sl}   
{}      
{\bf}   
{.}     
{.5em}  
{}      
\theoremstyle{thry}
\newtheorem{theorem}{Theorem}[section]
\newtheorem{proposition}[theorem]{Proposition}
\newtheorem{lemma}[theorem]{Lemma}
\theoremstyle{definition}
\newtheorem{example}[theorem]{Example}
\theoremstyle{remark}
\newtheorem{remark}[theorem]{Remark}
\def\punct{}
\newtheoremstyle{dotless}{}{}{\rm}{}{\bf}{\punct}{.5em}{}
\theoremstyle{dotless}
\newenvironment{taggedassumption}[1]
 {\taggedassumptionx}
 {\endtaggedassumptionx}
   \newcommand{\setword}[2]{%
   \phantomsection
   #1\def\@currentlabel{\unexpanded{#1}}\label{#2}%
   }
\begin{document}

\title{\bf Solvability of  Coupled Forward-Backward Volterra Integral Equations}

\author{
Wenyang Li\thanks{School of Mathematical Sciences, Shenzhen University, Shenzhen,
518060, China (Email: {\tt liwenyang2023@email.szu.edu.com}). }
~~~
Hanxiao Wang\thanks{Corresponding author. School of Mathematical Sciences, Shenzhen University, Shenzhen,
518060, China (Email: {\tt hxwang@szu.edu.cn}). This author is supported in part by NSFC Grant 12201424,
Guangdong Basic and Applied Basic Research Foundation 2023A1515012104,
and the Science and Technology Program of Shenzhen RCBS20231211090537064.}
~~~
Jiongmin Yong\thanks{Department of Mathematics, University of Central Florida, Orlando, FL 32816, USA
                 (Email: {\tt jiongmin.yong@ucf.edu}).
                 This author is supported by  NSF Grant DMS-2305475.}
}

\maketitle

\no\bf Abstract. \rm
Motivated by the optimality system associated with controlled (forward) Volterra integral equations (FVIEs, for short), the well-posedness of  coupled forward-backward Voterra integral equations (FBVIEs, for short) is studied. The main feature of FBVIEs is that the unknown $\{(\cX(t,s),\cY(t,s))\}$ has two arguments. By taking $t$ as a parameter and $s$ as a (time) variable, one can regard FBVIE as a system of ordinary differential equations (ODEs, for short), with infinite-dimensional space values $\{(\cX(\cd,s),\cY(\cd,s));\,s\in[0,T]\}$. To establish the well-posedness of such an FBVIE, a new  non-local monotonicity condition is introduced, by which a bridge
in infinite-dimensional spaces is constructed. Then by generalizing the method of continuation developed by \cite{Hu-Peng1995,Yong1997,Peng-Wu1999}
for differential equations, we have established the well-posedness of FBVIEs.
The key is to apply the chain rule to the mapping $t\mapsto\big[\int_\cd^T\lan \cY(s,s),\cX(s,\cd)\ran ds+\lan G(\cX(T,T)),\cX(T,\cd)\ran\big](t)$.

\ms

\no\bf Keywords. \rm
forward-backward Volterra integral equation, infinite-dimensional system of ordinary differential equations,
two-point boundary problem,  method of continuation.

\ms

\no\bf AMS subject classifications. \rm 45D05, 45J05, 49K21.

\section{Introduction}

In a number of problems involving biology systems, finance models, and fractional-order differential dynamics,
(forward) Volterra integral equations (FVIEs, for short) and the related optimal control theory have attracted strong attention recently.
The main feature is that FVIE can be used to describe some dynamics involving  memory.
Further, if we consider an optimal control problem of FVIE with a Bolza type cost functional, then by applying the Pontryagin maximum principle,
one will get a coupled forward-backward Volterra integral equation FBVIE, for short) (see \cite{Angell1976,Carlson1987,Yong2008,Lin2020}). Let us briefly look at such a problem. Consider the following state equation:
\bel{state}X(t)=x(t)+\int_0^tb(t,s,X(s),u(s))ds,\qq t\in[0,T],\ee
with the following cost functional
\bel{cost}J(u(\cd))=h(X(T))+\int_0^Tg(t,X(t),u(t))dt.\ee
In the above, $X(\cd)$ is the state valued in $\mathbb{R}^n$, and $u(\cd)$ is the control valued in $\dbR^m$. For simplicity, we assume that $(X,u)\mapsto(
b(t,s,X,u),h(X),g(t,X,u))$ is differentiable, and the control domain is the whole space $\dbR^m$. We assume that the state has no constraint (and therefore, the state space is the whole space $\dbR^n$). Our optimal control problem is to minimize \rf{cost}, subject to \rf{state}. Now, let $(\bar X(\cd),\bar u(\cd))$ be an optimal pair of this optimal control problem. For any admissible control $u(\cd)$, let $X(\cd)$ be the solution to the following variational equation:
\begin{align*}
X(t)&=\int_0^t\(b_x(t,s,\bar X(s),\bar u(s))X(s)+b_u(t,s,\bar X(s),\bar u(s))u(s)\)ds\\
&\equiv\f(t)+\int_0^tA(t,s)X(s)ds,\q t\in[0,T],
\end{align*}
where
$$
\f(t)=\int_0^tb_u(t,s,\bar X(s),\bar u(s))u(s)ds,\qq A(t,s)=b_x(t,s,\bar X(s),\bar u(s)),\q 0\les s\les t\les T.
$$
We let $\bar Y(\cd)$ be a continuous function satisfying the following adjoint equation:
$$\bar Y(t)=\psi(t)+\int_t^TA(s,t)^\top\bar Y(s)ds,\q t\in[0,T],$$
for some undetermined $\psi(\cd)$. Then
\begin{align*}
&\int_0^T\bar Y(t)^\top X(t)dt=\int_0^T\[\bar Y(t)^\top\(\f(t)+\int_0^tA(t,s)X(s)ds\)\]dt\\
&\q=\int_0^T\bar Y(t)^\top\f(t)dt+\int_0^T\(\int_s^TA(t,s)^\top \bar Y(t)dt\)^\top X(s)ds\\
&\q=\int_0^T\bar Y(t)^\top\f(t)dt+\int_0^T\(\bar Y(s)-\psi(s)\)^\top X(s)ds.
\end{align*}
Hence,
$$\int_0^T\psi(t)^\top X(t)dt=\int_0^T\bar Y(t)^\top\f(t)dt.$$
On the other hand, by the minimality of the optimal pair $(\bar X(\cd),\bar u(\cd))$, one has
\begin{align*}
0&\les \lim_{\e\downarrow 0}{J(\bar u(\cd)+\e u(\cd))-J(\bar u(\cd))\over\e}\\
&=h_x(\bar X(T))X(T)+\int_0^T\(g_x(s,\bar X(s),\bar u(s))X(s)+g_u(s,\bar X(s),\bar u(s))u(s)\)ds\\
&=h_x(\bar X(T))\int_0^T\(b_x(T,s,\bar X(s),\bar u(s))X(s)+b_u(T,s,\bar X(s),\bar u(s))u(s)\)ds\\
&\q+\int_0^T\(g_x(s,\bar X(s),\bar u(s))X(s)+g_u(s,\bar X(s),\bar u(s))u(s)\)ds\\
&=\int_0^T\(h_x(\bar X(T))b_x(T,s,\bar X(s),\bar u(s))+g_x(s,\bar X(s),\bar u(s))\)X(s)ds\\
&\q+\int_0^T\(h_x(\bar X(T))b_u(T,s,\bar X(s),\bar u(s))+g_u(s,\bar X(s),\bar u(s))\)u(s)ds.
\end{align*}
Now, by taking
$$\psi(s)=\(h_x(\bar X(T))b_x(T,s,\bar X(s),\bar u(s))+g_x(s,\bar X(s),\bar u(s))\)^\top,\q s\in[0,T],$$
the above gives
\begin{align*}
0&\les\int_0^T\bar Y(t)^\top\int_0^tb_u(t,s,\bar X(s),\bar u(s))u(s)dsdt\\
&\q+\int_0^T\(h_x(\bar X(T))b_u(T,s,\bar X(s),\bar u(s))+g_u(s,\bar X(s),\bar u(s))\)u(s)ds\\
&=\int_0^T\(\int_s^T\bar Y(t)^\top b_u(t,s,\bar X(s),\bar u(s))dt\\
&\q+h_x(\bar X(T))b_u(T,s,\bar X(s),\bar u(s))+g_u(s,\bar X(s),\bar u(s))\)u(s)ds.
\end{align*}
Then we have the following {\it optimality system:}
\bel{OS}\left\{\begin{aligned}
&\bar X(t)=x(t)+\int_0^tb(t,s,\bar X(s),\bar u(s))ds, \\
&\bar Y(t)=g_x(t,\bar X(t),\bar u(t))^\top+b_x(T,t,\bar X(t),\bar u(t))^\top h_x(\bar X(T))^\top\\
&\qq\q+\int_t^Tb_x (s,t,\bar X(t),\bar u(t))^\top\bar Y(s)ds,\\
& g_u(t,\bar X(t),\bar u(t))^\top+b_u(T,t,\bar X(t),\bar u(t))^\top h_x(\bar X(T))^\top\\
&\qq\qq+\int_t^Tb_u(s,t,\bar X(t),\bar u(t))^\top\bar Y(s)ds=0,\end{aligned}\right.\qq t\in[0,T].
\ee
This is a coupled FBVIE.
The coupling is given in the third relation, which is called a {\it stationarity condition}
or an {\it optimality condition}. The solution of \rf{OS} will provide a candidate for the optimal
trajectory of the corresponding control problems.
Therefore, the well-posedness of \rf{OS}  is important, at least for the optimal control theory of FVIEs.

\ms

A careful observation of the above shows that in the case that if we are considering a problem
with a linear state equation  and the cost being quadratic in control, i.e.,
\bel{LCONVEX}
\begin{aligned}
& b(t,s,X,u)=A(t,s)X+B(t,s)u, \q 0\les s\les t\les T,\\
& g(t,X,u)=Q(t,X)+\lan R(t)u,u\ran,\q h(X)=M(X),\q t\in[0,T],
\end{aligned}
\ee
then the optimality condition reads
\bel{LCONVEX1}
2R(t)\bar u(t)+ B(T,t)^\top M_x(\bar X(T))
+\int_t^TB(s,t)^\top\bar Y(s)ds=0,\q t\in[0,T].\ee
By assuming the existence of $R(\cd)^{-1}$, we end up with
\bel{OS-1}
\left\{\begin{aligned}
\bar X(t)= & x(t)+\int_0^t \[A(t,s)\bar X(s)-{1\over 2}B(t,s) R(s)^{-1}\int_s^T B(r,s)^\top\bar Y(r)dr\\
&\qq-{1\over 2} B(t,s)R(s)^{-1}B(T,s)^\top M_x(\bar X(T))\] ds, \\
\bar Y(t)= & Q_x(t,\bar X(t))+A(T,t)^\top M_x(\bar X(T))+\int_t^T A(s,t)^{\top}\bar Y(s)ds,\end{aligned}\right.
\q t\in[0,T].
\ee
Motivated by the above, we consider the following  FBVIE with general coefficients:
\bel{FBVIE-main}\left\{\begin{aligned}
&X(t)= x(t)+\int_0^t f\Big(t,s,X(s),Y(s),\int_s^T K(s,r) Y(r)dr,X(T)\Big) ds, \\
&Y(t)=h(t,X(t),X(T))+\int_t^T g(t,s,X(s), Y(s))ds,\end{aligned}\right.\q t\in[0,T],\ee
where  $x(\cd)$ is a given continuous function, and
$f(\cd)$, $g(\cd)$, $h(\cd)$, and $K(\cd)$ are suitable mappings.
We will focus on studying the well-posedness of FBVIE \rf{FBVIE-main}.

\ms

Another motivation for studying FBVIEs is the so-called time-inconsistent optimal control problems,
in which the optimality system is also a coupled FBVIE; see \cite{Hu-Jin-Zhou2012,Wang-Yong2021,Hamaguchi2021-1}, for example.
In addition, for the feature of involving memory, FBVIE has  potential applications in biology models
\cite{Gopalsamy1980,Kot2001,AIOmari-Gourley2003},
finance models \cite{Comte1998,ElEuch2018,ElEuch2019}, and
infinite-dimensional partial differential equations \cite{Viens-Zhang2019,Wang-Yong-Zhang2022,Wang-Yong-Zhou2023,Bondi2023,Bonesini2023}.
For example, we can regard the forward equation and the backward equation as an  evolution system
and   a utility, respectively.
When the  evolution system is affected by the  utility,
one will get a coupled FBVIE immediately. Such a phenomenon has appeared in many applications.
A typical example  is the  so-called large investor model in finance (see Cvitani\'{c} and Ma \cite{Cvitanic1996}).

\ms

To recover the semi-group property of \rf{FBVIE-main},
inspired by \cite{Viens-Zhang2019,Wang-Yong-Zhang2022},
we introduce the following auxiliary system of ODEs with the unknown $(\cX(\cd,\cd),\cY(\cd,\cd))$
having two arguments:
\bel{FBVIE-main1}\left\{\begin{aligned}
&\cX_s(t,s)=  f\Big(t,s,\cX(s,s),\cY(s,s), \int_s^TK(s,r)\cY(r,r)dr,\cX(T,T)\Big),\\
& \qq\qq\qq\qq (t,s)\in\D_*[0,T],\\
&\cY_s(t,s)=  -g(t,s,\cX(s,s),\cY(s,s)), \q (t,s)\in\D^*[0,T],\\
&\cX(t,0)=x(t),\q \cY(t,T)=h(t,\cX(t,t),\cX(T,T)),\q t\in[0,T],
\end{aligned}\right.\ee
where $\D_*[0,T]=\{(t,s)\in[0,T]^2\hbox{ with } t\ges s\}$
and $\D^*[0,T]=\{(t,s)\in[0,T]^2\hbox{ with } t\les s\}$.
Indeed, \rf{FBVIE-main1} provides an equivalent representation of \rf{FBVIE-main} with the relationship:
\bel{FBVIE-ODEs}
X(t)=\cX(t,t),\q Y(t)=\cY(t,t),\q t\in[0,T].
\ee

\ms

For the forward-backward structure,
\rf{FBVIE-main1} is essentially a Fredholm-type integral equation,
and thus one cannot use the contraction mapping theorem unless $T>0$ is small enough.
When the coefficients do not depend on $t$, \rf{FBVIE-main} reduces to the following forward-backward
ODE (FBDE, for short):
\bel{FBDE}\left\{\begin{aligned}
&p(t)= x+\int_0^t a(s,p(s),q(s)) ds, \\
&q(t)=\psi(p(T))+\int_t^T b(s,p(s), q(s))ds,\end{aligned}\right.\q t\in[0,T].\ee
For FBDEs, which can be regarded as a special case of FBSDEs, two types of methods have been developed to prove  solvability.
The first one is the so-called {\it four-step method}  (also called a  {\it decoupling method}),
which was initiated by Ma, Protter, and Yong \cite{Ma-Protter-Yong1994}.
The decoupling method mainly depends on the theory of partial differential equations (PDEs, for short).
Since the PDE associated with \rf{FBVIE-main1} is defined on the path space, which is infinite-dimensional,
its solvability might be more difficult than that of \rf{FBVIE-main1} itself; see Wang, Yong, and Zhang \cite{Wang-Yong-Zhang2022}, for example.
Thus, it seems too difficult to extend the decoupling method of FBDEs   to FBVIE \rf{FBVIE-main1}.

\ms
The second one is called a {\it method of  continuation}
(or a {\it monotonicity method}), which was introduced by Hu and Peng \cite{Hu-Peng1995},
and then developed by \cite{Yong1997,Peng-Wu1999}.
Some further developments on FBSDEs/FBDEs can be found in \cite{Delarue2002,Yong2010,Ma2015,Yu2022}.
The main idea of continuation method  is  to reach the solvability of FBDEs
by  starting with a known solvable FBDE.
The way  is to apply  the chain rule  together with some sort of monotonicity  conditions
to get certain a priori estimates. More precisely, in Hu and Peng \cite{Hu-Peng1995},
they  introduced the following monotonicity  condition:
\bel{FBDE-MC}\begin{aligned}
&\lan a(s,p_1,q_1)-a(s,p_2,q_2),\, q_1-q_2\ran-\lan b(s,p_1,q_1)-b(s,p_2,q_2),\, p_1-p_2\ran\\
&\q\les-\a|p_1-p_2|^2-\b|q_1-q_2|^2,\q \forall p_1,p_2,q_1,q_2\in\dbR^n,\\
&\lan \psi(p_1)-\psi(p_2),\, p_1-p_2\ran\ges \g |p_1-p_2|^2,\q \forall p_1,p_2\in\dbR^n,\end{aligned}\ee
where $\a,\b,\g>0$ are given constants, and then applied the chain rule to the following function:
\bel{FBDE-Ito}
\lan p(t),q(t)\ran,\q t\in[0,T].
\ee
In this paper, we will develop this method to prove the solvability of FBVIE \rf{FBVIE-main}. However, it is by no means easy;
some comments and discussions can be found in our previous paper \cite[Subsection 5.1]{Wang-Yong-Zhang2022}.

\ms
We now  make a careful analysis of FBVIE \rf{FBVIE-main}.
Recall that \rf{FBVIE-main} is equivalent to \rf{FBVIE-main1}.
From now on, we will focus on the auxiliary system \rf{FBVIE-main1} rather than  \rf{FBVIE-main}.
Since  at the point $(t,s)$, \rf{FBVIE-main1} involves the values $\cX(s,s)$ and $\cY(s,s)$ at $(s,s)$,
it is a non-local system.
Alternatively, we can view  $t$  as a parameter, and then \rf{FBVIE-main1}
can be regarded as a Hamiltonian system of  ODEs
with the solution $\{(\cX(\cd,s),\cY(\cd,s)\}_{s\in[0,T]}$,
and the two-point boundary condition $\cX(\cd,0)=x(\cd),\,\, \cY(\cd,T)=h(\cd,\cX(\cd,\cd),\cX(T,T))$.
Thus, \rf{FBVIE-main1} can be regarded as  an  infinite-dimensional version of FBDE \rf{FBDE}.

\ms
Note that  the monotonicity condition \rf{FBDE-MC} is defined at a local point $(t,x,y)$,
but \rf{FBVIE-main1} is a non-local  system  (or an infinite-dimensional system).
Then one immediately realizes that \rf{FBDE-MC} does not work to the system \rf{FBVIE-main1},
for which one should not apply the chain rule to the function $\lan \cX(t,s),\cY(t,s)\ran$ either.
%
So two questions appear naturally:

\ms
(i) \emph{How can one introduce a proper non-local monotonicity condition for \rf{FBVIE-main1}?}

\ms
(ii) \emph{Which function should we apply the chain rule?}

\ms

Our answer is  to impose the following {\it non-local monotonicity condition}:
\bel{MC-1}
\begin{aligned}
&\int_t^T \big\lan \h y(s),\, \h f(s,t)\big\ran ds-\Big\lan \h h(t)+\int_t^T\h g(t,s)ds,\, \h x(t) \Big\ran\\
&+\big\lan G(x_1(T))- G(x_2(T)), \,\h f(T,t)\big\ran \les -\gamma|\h x(t)|^2,\q \hb{for some constant }\g>0,
\end{aligned}\ee
and some smooth function $G(\cd)$ satisfying
\bel{MC-3}
\begin{aligned}
&\lan G(x_1(T))- G(x_2(T)),\, x_1(T)-x_2(T)\ran\ges | G^{1\over 2}_0 [x_1(T)- x_2(T)]|^2,\\
& |G(x_1(T))- G(x_2(T))|\les K| G^{1\over 2}_0 [x_1(T)- x_2(T)]|,
\end{aligned}
\ee
where $G_0\ges 0$ is a positive semi-definite matrix,
$K>0$ is a constant,
and
\bel{MC-2}
\begin{aligned}
&\h x(t)=x_1(t)-x_2(t),\q \h y(t)=y_1(t)-y_2(t),\\
&\h h(t)=h(t,x_1(t), x_1(T))-h(t,x_2(t),x_2(T)),\\
&\h g(t,s)= g(t,s,x_1(s), y_1(s))- g(t,s,x_2(s), y_2(s)),\\
&\h f(t,s)=f\Big(t,s,x_1(s),y_1(s), \int_s^T K(s,r)y_1(r)dr,x_1(T)\Big)\\
&\qq\qq-f\Big(t,s,x_1(s),y_1(s), \int_s^T K(s,r)y_1(r)dr,x_2(T)\Big),
\end{aligned}\ee
for any $x_i(\cd),y_i(\cd)\in C([0,T];\dbR^n);\,i=1,2$.
Note that  \rf{MC-1} is a non-local monotonicity condition (or a monotonicity condition defined
on the infinite-dimensional space $C([0,T];\dbR^n)$).
Then, with the  method of continuation, by applying the chain rule to the following function:
\bel{FBVIE-Ito}
\int_t^T \lan \cX(s,t),\cY(s,s)\ran ds+\lan G( \cX(T,T)),\cX(T,t)\ran,\q t\in[0,T],
\ee
we can establish the well-posedness of \rf{FBVIE-main1}.

\ms

The main difficulty is to find an appropriate monotonicity condition for \rf{FBVIE-main1}.
This is achieved based on some new observations for FBDEs.
From Subsection \ref{subsec:comparison},
we can see that \rf{MC-1} is essentially an infinite-dimensional
version of \rf{FBDE-MC}, and when  FBVIE \rf{FBVIE-main1} reduces to an FBDE,
it is equivalent to \rf{FBDE-MC}. In Section \ref{sec:application}, we will show the standard condition in
LQ optimal control problems for FVIEs is sufficient for  \rf{MC-1}.
With the non-local monotonicity condition \rf{MC-1}, we still need to find a simple and known solvable
FBVIE as the start of applying the  method of continuation.
Interestingly, we will see that this known solvable FBVIE \rf{linear} is essentially an FBDE.

\ms

The rest of the paper is organized as follows.
In Section \ref{sec:pre}, we introduce some notations and state the main result of our paper.
The proof is given in Section \ref{sec:well}. More precisely,
we show the idea of finding the non-local monotonicity condition \rf{MC-1} in Subsection \ref{subsec:mc},
compare our non-local monotonicity condition  \rf{MC-1} with the local one \rf{FBDE-MC} in Subsection \ref{subsec:comparison},
prove the uniqueness result in Subsection \ref{subsec:unique}, and establish the solvability in Subsection \ref{subsec:existence}.
Finally, two simple examples are given in Section \ref{sec:application}.

\section{Preliminary and the main result}\label{sec:pre}

Let $T>0$ be a given time horizon and denote
$$
\D_*[0,T]=\{(t,s)\in[0,T]^2\hbox{ with } t\ges s\}
\hbox{ and } \D^*[0,T]=\{(t,s)\in[0,T]^2\hbox{ with } t\les s\}.
$$
We introduce the following spaces of functions:
\begin{align*}
& C([0,T];\dbR^n):
   \hb{~~the space of $\dbR^n$-valued, continuous functions on $[0,T]$};\\
& L^2([0,T];\dbR^n):
   \hb{~~the space of $\dbR^n$-valued, measurable, square integrable  functions on $[0,T]$}.
\end{align*}
Similarly, we can define the spaces of continuous functions on $\D_*[0,T]$
and $\D^*[0,T]$ as $C(\D_*[0,T];\dbR^n)$ and $C(\D^*[0,T];\dbR^n)$, respectively.

\ms

We impose the following assumptions for FBVIE \rf{FBVIE-main1}.

\begin{taggedassumption}{(A1)}\label{ass:A1}
The non-local monotonicity condition \rf{MC-1}--\rf{MC-3} holds.
\end{taggedassumption}
\begin{taggedassumption}{(A2)}\label{ass:A2}
The coefficients of FBVIE \rf{FBVIE-main1} are continuous functions.
Moreover, there exists a constant $L>0$ such that
\begin{align*}
&|f(t_1,s,x_1,y_1,y_1',x_1')-f( t_2,s, x_2,y_2,y_2',x_2')|\\
&\q\les L\big(|t_1- t_2|^\a+|x_1-x_2|+|y_1-y_2|+|G_0( x_1'-x_2')|+|y_1'-y_2'| \big), \\
&\qq \forall (t_i,s,x_i,y_i,y_i',x_i')\in\D_*[0,T]\times(\dbR^n)^4,\q i=1,2,\\
&|g(t_1,s,x_1,y_1)-g(t_2, s,x_2,y_2)|+|h(t_1,x_1,x_1')-h(t_2,x_2,x_2')|\\
&\q\les
L\big(|t_1- t_2|^\a +|x_1- x_2|+|y_1- y_2|+|G_0( x_1'-x_2')|\big), \\
&\qq \forall (t_i,s,x_i,y_i,x_i')\in\D^*[0,T]\times(\dbR^n)^3,\q i=1,2,
\end{align*}
for some $\a\in(0,1]$, where $G_0\ges 0$ is given by \rf{MC-3}.
\end{taggedassumption}

The main result of our paper can be stated as follows.

\begin{theorem}\label{Thm:well-posedness}
Let {\rm \ref{ass:A1}} and {\rm \ref{ass:A2}} hold.
Then the FBVIE \rf{FBVIE-main1} admits a unique solution $(\cX(\cd,\cd),\cY(\cd,\cd))
\in C(\D_*[0,T];\dbR^n)\times C(\D^*[0,T];\dbR^n)$.
\end{theorem}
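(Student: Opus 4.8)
The plan is to prove Theorem \ref{Thm:well-posedness} via the method of continuation, treating \rf{FBVIE-main1} as an infinite-dimensional two-point boundary value problem with ``time'' variable $s$ and parameter $t$. First I would set up a one-parameter family of FBVIEs indexed by $\rho\in[0,1]$: at $\rho=0$ the system should degenerate to a decoupled (hence solvable) problem --- as the introduction hints via \rf{linear}, this endpoint is essentially an FBDE, where the forward equation is driven only by $G(\cX(T,T))$-type terms and a known linear kernel, and can be solved by a fixed point in a suitable weighted space or by direct inversion. At $\rho=1$ we recover \rf{FBVIE-main1}. The family is chosen so that the non-local monotonicity condition \rf{MC-1}--\rf{MC-3} holds uniformly in $\rho$ (one interpolates between the given coefficients and the linear reference coefficients, and convexity of the monotonicity inequality in the coefficients preserves it). The continuation claim is then the standard one: the set of $\rho\in[0,1]$ for which the $\rho$-system is well-posed (for \emph{every} free inhomogeneous term in the relevant spaces) is nonempty, open, and closed, hence equals $[0,1]$.

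The analytic heart of the argument is the \emph{a priori estimate}: if $(\cX_1,\cY_1)$ and $(\cX_2,\cY_2)$ solve the $\rho$-system with (possibly different) inhomogeneous data, then the difference is controlled by the difference of the data, with a constant independent of $\rho$. To get it, I would apply the chain rule in the $s$-variable to the scalar function displayed in \rf{FBVIE-Ito}, namely $t\mapsto\int_t^T\lan\cX(s,t),\cY(s,s)\ran ds+\lan G(\cX(T,T)),\cX(T,t)\ran$, applied to the difference $(\h\cX,\h\cY)$. Differentiating, the $\cX_s$-terms bring in $\h f$, the $\cY_s$-terms bring in $\h g$, and the boundary term at $s=T$ brings in $\h h$ together with the $G$-difference; collecting these and integrating over the appropriate triangle, the left side of \rf{MC-1} appears after setting the diagonal $x_i(t)=\cX_i(t,t)$, $y_i(t)=\cY_i(t,t)$. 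The monotonicity inequality then yields $\int_0^T\gamma|\h\cX(t,t)|^2\,dt\le(\text{data terms})$, i.e. an $L^2$ bound on the diagonal of $\h\cX$. Feeding this back into the forward and backward integral equations --- using the Lipschitz bounds of \ref{ass:A2} and Gronwall-type arguments in $s$ (with $t$ as parameter) --- upgrades it to a $C$-estimate on $(\h\cX,\h\cY)$ over both triangles, controlling the full $C(\D_*)\times C(\D^*)$ norm of the difference by the data. Applied with zero data this gives uniqueness; applied in general it gives the a priori bound needed for continuation.

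The remaining pieces are more routine. For the \emph{closedness} step one takes $\rho_k\to\rho_*$ with the $\rho_k$-systems solvable, uses the a priori estimate to see that the solutions (for fixed data) form a Cauchy sequence in $C(\D_*)\times C(\D^*)$ --- since the difference of the $\rho_k$- and $\rho_j$-coefficients acts like an extra inhomogeneous term whose size is $O(|\rho_k-\rho_j|)$ times the (bounded) solution norm --- and passes to the limit by continuity of the coefficients. For \emph{openness} near a solvable $\rho_0$ one solves the $\rho$-system by contraction: write the $\rho$-system as the $\rho_0$-system plus a perturbation of order $|\rho-\rho_0|$, invert the $\rho_0$-system (available by the inductive hypothesis, with norm bound from the a priori estimate), and close a fixed-point argument for $|\rho-\rho_0|$ small, with the smallness threshold uniform in $\rho_0$ precisely because the a priori constant is uniform. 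Finally one checks solvability at $\rho=0$ directly.

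I expect the main obstacle to be the chain-rule computation itself: \rf{FBVIE-main1} is non-local (the dynamics at $(t,s)$ depend on the diagonal values $\cX(s,s),\cY(s,s)$ and on $\cX(T,T)$), so one must differentiate \rf{FBVIE-Ito} carefully in $s$, keeping the parameter $t$ fixed, and track both the ``bulk'' contributions along $\D_*$/$\D^*$ and the boundary contributions at $s=t$ and $s=T$; the bookkeeping that makes the left-hand side of \rf{MC-1} emerge cleanly, and the verification that the reference system \rf{linear} both satisfies the uniform monotonicity and is genuinely solvable (reducing it to an FBDE and invoking classical results), are where the real work lies. Once the a priori estimate is in hand, the topological continuation argument is standard, following \cite{Hu-Peng1995,Yong1997,Peng-Wu1999}.
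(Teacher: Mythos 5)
Your proposal follows essentially the same route as the paper: a continuation family interpolating between the linear reference system \rf{linear} (solved by reduction to an FBDE via Peng--Wu) and \rf{FBVIE-main1}, with the key a priori/contraction estimate obtained by applying the chain rule in $t$ to the functional \rf{FBVIE-Ito} and invoking the non-local monotonicity condition \rf{MC-1}--\rf{MC-3}, then upgrading the diagonal $L^2$ bound to sup-norm bounds via standard FVIE/BVIE estimates. The paper implements the continuation exactly as your ``openness with a uniform step $\delta_0$'' argument (so no separate closedness step is needed), and its only extra technical ingredient is a mollification step to solve \rf{linear} for merely continuous $f_0$ and $x$.
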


\begin{remark}
The solvability of FBVIEs in a small time horizon was proved by Hamaguchi \cite{Hamaguchi2021}
by the classical fixed point theorem.
The case with an  arbitrarily given $T>0$ is much more challenging.
Inspired by \cite{Wang-Yong-Zhang2022}, the first solvability result of FBVIEs
in an arbitrarily long time horizon was obtained by Wang, Yong, and Zhou \cite{Wang-Yong-Zhou2023} for linear systems,
by introducing the so-called path-dependent Riccati equation. Essentially, this is a four-step method,
and heavily depends on the symmetric structure of the linear system.
To the best of our knowledge, \autoref{Thm:well-posedness} is the first result on the well-posedness of FBVIEs
with general nonlinear coefficients and an  arbitrarily given time horizon.
\end{remark}

\begin{remark}
In \cite{Hamaguchi2021,Wang-Yong-Zhang2022,Wang-Yong-Zhou2023}, the authors considered the stochastic case of FBVIEs,
which is a generalization of \rf{FBVIE-main1} with an additional It\^{o} integral.
We remark that the monotonicity method introduced in the current paper still works for stochastic FBVIEs.
Additionally, we need to handle some technical issues.
We hope to focus on introducing the basic idea here,
and will report the results of stochastic FBVIEs in future publications.
\end{remark}

\section{Well-posedness}\label{sec:well}

\subsection{Find a monotonicity condition  in infinite-dimensional space}\label{subsec:mc}

It is known that the monotonicity condition plays a central role in establishing the solvability of FBDEs
by the method of  continuation; see \cite{Hu-Peng1995,Yong1997,Peng-Wu1999}, for example.
However, since FBVIE \rf{FBVIE-main1} is a non-local system, the local monotonicity condition \rf{FBDE-MC} defined at a point
cannot be applied here. We need to find an infinite-dimensional version of it,
and then  construct  a bridge which can link two infinite-dimensional spaces.

\ms

The key is to well understand the intuition of calculating $d[\lan p(t),q (t)\ran]$  in
Hu and Peng \cite{Hu-Peng1995}.
Our idea is to return to the linear-quadratic optimal control theory.

\ms

\begin{center}
\textbf{An insight from value function}
\end{center}

Consider the controlled  linear ODE:
\bel{LQ-ODE1}
p(t)=x+\int_0^t\[A(s) p(s)+B(s) u(s)\]ds,\q t \in[0,T],
\ee
and the cost functional of a quadratic form:
\bel{LQ-ODE2}
J(u(\cd))=\int_0^T\[\lan Q(t)p(t),p(t)\ran+\lan R(t) u(t),u(t)\ran\] d t+\lan G p(T),p(T)\ran.
\ee
Then the corresponding Hamiltonian system reads
\bel{FBDE-LQ}\left\{\begin{aligned}
&p(t)= x+\int_0^t \big[A(s)p(s)-B(s)R(s)^{-1}B(s)^\top q(s)\big] ds, \q t\in[0,T],\\
& q(t)=G p(T)+\int_t^T \big[A(s)^\top q(s) +Q(s)p(s)\big] ds,\q t\in[0,T],\end{aligned}\right.\ee
with the optimal control
$$\bar u(s)=-R(s)^{-1}B(s)^\top q(s),\q s\in[0,T].$$
From Yong and Zhou \cite{Yong-Zhou1999}, one immediately finds that  \rf{FBDE-MC} almost coincides with the so-called standard condition,
but more importantly, we realize that $\lan p(t),q(t)\ran$ is indeed the function of optimal values; that is
$$\lan p(t),q(t)\ran=\int_t^T \[\lan Q(s)p(s),p(s)\ran+\lan R(s)\bar u(s),\bar u(s)\ran\]ds+\lan Gp(T),p(T)\ran.$$
Thus, essentially, Hu and Peng \cite{Hu-Peng1995} calculated the derivative of the optimal value with respect to the time variable,
and we know that the  derivative is always non-negative under proper conditions.
Based on this observation, our approach is beginning to come out:
We should calculate the optimal value of  optimal control problems for VIEs (i.e., \rf{FBVIE-Ito}),
by which we can find a proper monotonicity condition in infinite-dimensional spaces (i.e., \rf{MC-1}).

\subsection{Comparison between the non-local monotonicity condition \rf{MC-1} and Hu and Peng's condition \rf{FBDE-MC} }\label{subsec:comparison}

In this subsection, we will show that \rf{MC-1} is essentially
an infinite-dimensional version of Hu and Peng's
monotonicity condition  \rf{FBDE-MC} given in \cite{Hu-Peng1995}.
When the system \rf{FBVIE-main} reduces to an FBDE,
\rf{MC-1} is almost equivalent to  \rf{FBDE-MC}.

\ms

For simplicity, we only  consider the Hamiltonian system associated with the LQ optimal control problem \rf{LQ-ODE1}--\rf{LQ-ODE2}.
By taking  \rf{LQ-ODE1}--\rf{LQ-ODE2} as an LQ optimal control problem for VIEs, the associated FBVIE reads
$$
\left\{\begin{aligned}
X(t)= & x+\int_0^t \[A(s) X(s)-B(s) R(s)^{-1}\int_s^TB(s)^\top  Y(r)dr\\
&\qq-B(s)R(s)^{-1}B(s)^\top G X(T)\] ds, \\
Y(t)= & Q(t)X(t)+A(t)^\top G X(T)+\int_t^T A(t)^{\top} Y(s)ds,\end{aligned}\right.
\q t\in [0,T].
$$
Let
$$
p(t)=X(t),\q q(t)=\int_t^T Y(s)ds+GX(T),\q t\in[0,T],
$$
then we have
$$
\left\{\begin{aligned}
&p(t)=  x+\int_0^t \[A(s) p(s)-B(s) R(s)^{-1}B(s)^\top q(s) \]ds, \\
&q(t)= G p(T)+\int_t^T \[Q(s)p(s)+A(s)^\top q(s)\]ds,\end{aligned}\right.\qq t\in[0,T],
$$
which is an FBDE.
By \rf{MC-2} and \rf{FBDE-LQ}, the corresponding functions $\h x(\cd),\h y(\cd)$, $\h f(\cd), \h g(\cd), \h h(\cd)$,
and $a(\cd), b(\cd), \psi(\cd)$ can be well-defined.
Then
\begin{align*}
&\int_t^T \big\lan \h y(s),\, \h f(s,t)\big\ran ds-\Big\lan \h h(t)+\int_t^T\h g(t,s)ds,\, \h x(t) \Big\ran+\big\lan G\h x(T), \,\h f(T,t)\big\ran \\
&\q=\Big\lan \int_t^T  \h y(s)ds+G\h x(T),\, \h f(t) \Big\ran -\Big\lan \h h(t)+\int_t^T\h g(s)ds,\, \h x(t) \Big\ran\\
&\q=\big\lan q_1(t)-q_2(t),\,  a(t,p_1(t),q_1(t))-a(t,p_2(t),q_2(t))\big \ran\\
&\qq -\big\lan b(t,p_1(t),q_1(t))-b(t,p_2(t),q_2(t)),\,  p_1(t)-p_2(t) \big \ran,\q t\in[0,T].\end{align*}
Note that the above only depends on the values at a local point $t$.
Then from \rf{MC-1}, we have
\bel{MC-MC}\begin{aligned}
&\big\lan a(t,p_1,q_1)-a(t,p_2,q_2),\,q_1-q_2\big \ran-\big\lan b(t,p_1,q_1)-b(t,p_2,q_2),\,  p_1-p_2 \big \ran\\
&\q \les -\g |p_1-p_2|^2,\q \forall p_1,p_2,q_1,q_2\in\dbR^n,\\
&\lan \psi(p_1)-\psi(p_2),\, p_1-p_2\ran=G|p_1-p_2|^2\ges 0,\q \forall p_1,p_2\in\dbR^n.
\end{aligned}\ee
The non-local monotonicity condition \rf{MC-1} reduces to a local one.
Clearly, the above monotonicity condition is almost equivalent to \rf{FBDE-MC}.
Indeed, \rf{MC-MC} is a little bit weaker than \rf{FBDE-MC},
and in \cite{Peng-Wu1999}, Peng and Wu proved the well-posedness of \rf{FBDE} under  a condition like this.

\subsection{Uniqueness}\label{subsec:unique}

\begin{proposition}\label{prop:uniqueness}
Let {\rm \ref{ass:A1}} and {\rm \ref{ass:A2}} hold. Then FBVIE \rf{FBVIE-main1} admits at most one solution.
\end{proposition}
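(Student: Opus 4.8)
The plan is to suppose that $(\cX_1,\cY_1)$ and $(\cX_2,\cY_2)$ are two solutions of \rf{FBVIE-main1} in the stated spaces, set
$\h\cX(t,s)=\cX_1(t,s)-\cX_2(t,s)$, $\h\cY(t,s)=\cY_1(t,s)-\cY_2(t,s)$, and then exploit exactly the bridge function that drives the whole continuation argument, namely
$$
\Phi(t)=\int_t^T\lan\h\cX(s,t),\h\cY(s,s)\ran\,ds+\lan G(\cX_1(T,T))-G(\cX_2(T,T)),\h\cX(T,t)\ran,\q t\in[0,T].
$$
Since along a solution $\cX(t,\cd)$ is $C^1$ in $s$ with $\cX(t,0)=x(t)$ (so $\h\cX(t,0)=0$) and $\cY(t,\cd)$ is $C^1$ in $s$ with the terminal value $\cY(t,T)=h(t,\cX(t,t),\cX(T,T))$, the function $\Phi$ is absolutely continuous and we can differentiate it in $t$. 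The two ``free'' arguments are treated differently: the $t$ in $\cX(s,t)$ is the parameter coming from the $s$-ODE, so $\partial_t\h\cX(s,t)=\h f(s,t)-$type terms via the forward equation, while the outer $t$ as a lower limit of integration produces the boundary term $-\lan\h\cX(t,t),\h\cY(t,t)\ran$ plus, through the backward equation for $\cY(\cd,s)$, an integral of $\h g(t,s)$-type terms.

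The key step is the computation of $\Phi'(t)$. Carrying out the chain rule on the two pieces and recombining, the terminal contribution at $s=T$ combines with the $G$-term, and using $\cX(T,0)=x(T)$, $\h\cX(T,0)=0$ together with the defining equations \rf{FBVIE-main1}, I expect to arrive at
$$
\Phi'(t)=-\Big[\int_t^T\lan\h\cY(s,s),\h f(s,t)\ran\,ds-\Big\lan\h h(t)+\int_t^T\h g(t,s)\,ds,\h\cX(t,t)\Big\ran+\lan G(\cX_1(T,T))-G(\cX_2(T,T)),\h f(T,t)\ran\Big],
$$
with the correspondences $x_i(\cd)=\cX_i(\cd,\cd)$, $y_i(\cd)=\cY_i(\cd,\cd)$, $\h x(t)=\h\cX(t,t)$, $\h y(t)=\h\cY(t,t)$. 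Now the non-local monotonicity condition \rf{MC-1} from \ref{ass:A1} says precisely that the bracketed quantity is $\les-\g|\h\cX(t,t)|^2$, hence $\Phi'(t)\ges\g|\h\cX(t,t)|^2\ges0$ for a.e.\ $t$. On the other hand $\Phi(T)=0$ because both the integral (empty) and the $G$-term (containing $\h\cX(T,T)$, which cancels the $\h\cX(T,\cd)$ evaluated at... no — rather because $\Phi(T)=\lan G(\cX_1(T,T))-G(\cX_2(T,T)),\h\cX(T,T)\ran\ges|G_0^{1/2}\h\cX(T,T)|^2\ges0$ by \rf{MC-3}), and $\Phi(0)=\int_0^T\lan\h\cX(s,0),\h\cY(s,s)\ran ds+\lan\cdots,\h\cX(T,0)\ran=0$ since $\h\cX(\cd,0)\equiv0$. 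Therefore $0=\Phi(0)=\Phi(T)-\int_0^T\Phi'(t)\,dt$, which forces $\int_0^T\g|\h\cX(t,t)|^2\,dt\les0$ together with $\Phi(T)\les0$; combined with $\g>0$ this yields $\h\cX(t,t)=0$ for all $t\in[0,T]$, and also $|G_0^{1/2}\h\cX(T,T)|=0$, which is automatic.

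Once $\h\cX(t,t)\equiv0$ is established, I would propagate it: from the forward equation in \rf{FBVIE-main1} one has $\partial_s\h\cX(t,s)=\h f(t,s)$ where $\h f$ depends only on $\cX_i(s,s)$, $\cY_i(s,s)$, $\int_s^TK(s,r)\cY_i(r,r)dr$ and $\cX_i(T,T)$; since $\h\cX(T,T)=0$, the only surviving dependence is through $\h\cY(r,r)$. Then feed this into the backward equation: $\h\cY(t,t)=h(t,\cX_1(t,t),\cX_1(T,T))-h(t,\cX_2(t,t),\cX_2(T,T))+\int_t^T\h g(t,s)\,ds$, and with $\h\cX(\cd,\cd)$ on the diagonal already zero, the Lipschitz bound from \ref{ass:A2} gives $|\h\cY(t,t)|\les L\int_t^T|\h\cY(s,s)|\,ds$, whence $\h\cY(t,t)\equiv0$ by Gr\"onwall. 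Finally, re-integrating $\partial_s\h\cX(t,s)=\h f(t,s)=0$ from $s=0$ (where $\h\cX(t,0)=0$) gives $\h\cX(t,s)\equiv0$ on $\D_*[0,T]$, and likewise $\partial_s\h\cY(t,s)=-\h g(t,s)$ with $\h g$ now vanishing (all diagonal values are zero) plus $\h\cY(t,T)=0$ gives $\h\cY(t,s)\equiv0$ on $\D^*[0,T]$.

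I expect the main obstacle to be the bookkeeping in the $\Phi'(t)$ computation: one must carefully distinguish the role of $t$ as an integration endpoint versus its role as a transport parameter inside $\cX(s,t)$ and $\cX(T,t)$, keep track of the terminal $s=T$ boundary term in the backward equation and show it assembles into exactly the $\lan G(\cdot)-G(\cdot),\h f(T,t)\ran$ term matching the left-hand side of \rf{MC-1}, and justify differentiating under the integral sign (which is fine since the integrands are continuous and the $s$-derivatives exist and are continuous by the regularity of solutions in $C(\D_*)\times C(\D^*)$). A secondary technical point is making sure the $G$-term at $t=T$ and the use of \rf{MC-3} are handled so that $\Phi(T)\ges0$ is genuinely needed — in fact the cleanest route is to note $0=\Phi(0)$ and $\Phi(T)\ges0$ and $\Phi'\ges\g|\h\cX(t,t)|^2$, so $0=\Phi(T)-\int_0^T\Phi'\ges\g\int_0^T|\h\cX(t,t)|^2dt\ges0$, which simultaneously kills the diagonal of $\h\cX$; everything else is then an elementary Gr\"onwall propagation as above.
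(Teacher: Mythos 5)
You take exactly the paper's route: the same bridge functional
$\Phi(t)=\int_t^T\lan\h\cX(s,t),\h\cY(s,s)\ran ds+\lan G(\cX_1(T,T))-G(\cX_2(T,T)),\h\cX(T,t)\ran$,
the boundary facts $\h\cX(\cd,0)\equiv0$ and $\Phi(T)\ges|G_0^{1/2}\h\cX(T,T)|^2\ges0$ from \rf{MC-3}, the non-local monotonicity condition \rf{MC-1} applied after the chain rule, and then a propagation step once the diagonal of $\h\cX$ vanishes. Your final stage (Lipschitz bound from \ref{ass:A2}, backward Gr\"onwall for $\h\cY(t,t)$, then integrating the $s$-equations from the boundary data) is correct and is just the explicit version of the paper's remark that both solutions then satisfy the same decoupled FBVIE \rf{uniqueness-proof1}.

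The one genuine flaw is the sign at the central step. The chain rule gives
\begin{align*}
\Phi'(t)&=-\lan\h\cY(t,t),\h\cX(t,t)\ran+\int_t^T\lan\h\cY(s,s),\h f(s,t)\ran ds+\lan G(\cX_1(T,T))-G(\cX_2(T,T)),\h f(T,t)\ran\\
&=\int_t^T\lan\h\cY(s,s),\h f(s,t)\ran ds-\Big\lan\h h(t)+\int_t^T\h g(t,s)ds,\,\h\cX(t,t)\Big\ran\\
&\q+\lan G(\cX_1(T,T))-G(\cX_2(T,T)),\h f(T,t)\ran,
\end{align*}
i.e.\ $\Phi'(t)$ equals the left-hand side of \rf{MC-1} itself, not its negative; hence \rf{MC-1} yields $\Phi'(t)\les-\g|\h\cX(t,t)|^2$, so $\Phi$ is nonincreasing, not nondecreasing. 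Your claim $\Phi'(t)\ges\g|\h\cX(t,t)|^2$ reverses this, and with that sign the endgame does not close: from $\Phi(0)=0$, $\Phi'\ges0$ and $\Phi(T)\ges0$ nothing is forced, and the chain $0=\Phi(T)-\int_0^T\Phi'(t)dt\ges\g\int_0^T|\h\cX(t,t)|^2dt$ that you display does not follow from $\Phi'\ges\g|\h\cX(t,t)|^2$ --- it is precisely what the opposite inequality gives. With the corrected sign the argument closes exactly as you intend: $\Phi(T)=\Phi(0)+\int_0^T\Phi'(t)dt\les-\g\int_0^T|\h\cX(t,t)|^2dt\les0$, while $\Phi(T)\ges0$ by \rf{MC-3}, hence $\h\cX(t,t)\equiv0$ (and $\h\cX(T,T)=0$), after which your propagation finishes the proof. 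So the approach is the paper's and the skeleton is sound; just fix the direction of monotonicity of $\Phi$ and the internal inconsistency it creates.
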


\begin{proof}
Suppose that $(\cX_i(\cd,\cd),\cY_i(\cd,\cd))\in
C(\D_*[0,T];\dbR^n)\times  C(\D^*[0,T];\dbR^n);i=1,2$ satisfy \rf{FBVIE-main1}.
Denote
\begin{align}
&\h \cX(t,s)=\cX_1(t,s)-\cX_2(t,s),\q  \h \cY(t,s)=\cY_1(t,s)-\cY_2(t,s), \nonumber\\
 &\h  h(t)=h(t,\cX_1(t,t),\cX_1(T,T))-h(t,\cX_2(t,t),\cX_2(T,T)),\nn\\
 & \h g(t,s)= g(t,s,\cX_1(s,s),\cY_1(s,s))- g(t,s,\cX_2(s,s),\cY_2(s,s)),\nn\\
 &\h f(t,s)=f\Big(t,s,\cX_1(s,s),\cY_1(s,s), \int_s^T K(s,r)\cY_1(r,r)dr,\cX_1(T,T)\Big)\nn\\
 &\qq\qq-f\Big(t,s,\cX_2(s,s),\cY_2(s,s), \int_s^T K(s,r)\cY_2(r,r)dr,\cX_2(T,T)\Big).\nn
\end{align}
Then
$$
\left\{\begin{aligned}
&{d\h\cX(t,s)\over ds}= \h f(t,s), \q (t,s)\in\D_*[0,T],\\
&{d \h\cY(t,s)\over ds}= -\h g(t,s), \q (t,s)\in\D^*[0,T],\\
&\h\cX(t,0)=0,\q \h\cY(t,T)=\h h(t),\q t\in[0,T],
\end{aligned}\right.
$$
which implies that
\begin{align*}
&{d\over dt}\[\int_t^T \lan \h\cY(s,s),\h\cX(s,t)\ran ds+\lan G(\cX_1(T,T)) -G(\cX_2(T,T)),\,\h\cX(T,t)\ran\]\\
&\q=-\lan \h\cY(t,t),\h\cX(t,t)\ran+\int_t^T\lan \h\cY(s,s),\h\cX_t(s, t)\ran ds\\
&\qq+\lan  G(\cX_1(T,T)) -G(\cX_2(T,T)),\h\cX_t(T, t)\ran\\
&\q=-\Big\lan\h h(t)+\int_t^T \h g(t,s)ds,\, \h\cX(t,t)\Big\ran+\int_t^T \lan \h\cY(s,s),\,\h f(s,t)\ran ds\\
&\qq+\big\lan G(\cX_1(T,T)) -G(\cX_2(T,T)), \,\h f(T,t)\big\ran\\
&\q\les-\g |\h\cX(t,t)|^2,\q t\in[0,T].
\end{align*}
Thus,
\begin{align*}
&\lan G(\cX_1(T,T)) -G(\cX_2(T,T)),\,\h\cX(T,T)\ran+\g\int_0^T  |\h\cX(t,t)|^2dt\les0.
\end{align*}
Note that
$$
\lan G(\cX_1(T,T)) -G(\cX_2(T,T)),\,\h\cX(T,T)\ran\ges| G^{1\over 2}_0 [\cX_1(T,T)- \cX_2(T,T)]|^2\ges0.
$$
It turns out that
$$
\cX_1(t,t)=\cX_2(t,t),\q t\in[0,T].
$$
Then, both $(\cX_1(\cd,\cd),\cY_1(\cd,\cd))$ and $(\cX_2(\cd,\cd),\cY_2(\cd,\cd))$ satisfy
\bel{uniqueness-proof1}
\left\{\begin{aligned}
&{d \cX_i(t,s)\over ds}=  f\Big(t,s,\cX(s,s),\cY_i(s,s), \int_s^T K(s,r)\cY_i(r,r)dr,\cX(T,T)\Big), \\
&\qq\qq\qq (t,s)\in\D_*[0,T],\\
&{d \cY_i(t,s)\over ds}= - g(t,s,\cX(s,s),\cY_i(s,s)), \q (t,s)\in\D^*[0,T],\\
&\cX_i(t,0)=x(t),\q \cY_i(t,T)=h(t,\cX(t,t),\cX(T,T)),\q t\in[0,T],
\end{aligned}\right.
\ee
where
$$
\cX(t,t):=\cX_1(t,t)=\cX_2(t,t),\q t\in[0,T].
$$
Clearly, FBVIE \rf{uniqueness-proof1} is a decoupled one,
and then admits a unique solution. Thus,
$(\cX_1(\cd,\cd),\cY_1(\cd,\cd))=(\cX_2(\cd,\cd),\cY_2(\cd,\cd))$.
\end{proof}

\subsection{Existence}\label{subsec:existence}

We now give an existence result of FBVIE \rf{FBVIE-main1}.
We begin with a simple FBVIE, whose solvability can be obtained
by modifying the result in Peng and Wu \cite{Peng-Wu1999}.

\begin{lemma}\label{lemm:alpha=0}
For any given continuous functions $f_0(\cd,\cd)$, $g_0(\cd,\cd)$, $h_0(\cd)$, and $x(\cd)$,
the following linear FBVIE
\bel{linear}\left\{\ba
&\cX_s(t,s)=- \int_s^T \cY(r,r)dr-G(\cX(T,T))+f_0(t,s),\q (t,s)\in\D_*[0,T], \\
&\cY_s(t,s)=-g_0(t,s),\q (t,s)\in\D^*[0,T],\\
&\cX(t,0)=x(t),\q \cY(t,T)=\cX(t,t)+h_0(t),\q t\in[0,T],\ea\right.\ee
admits a unique solution $(\cX(\cd,\cd),\cY(\cd,\cd))
\in C(\D_*[0,T];\dbR^n)\times C(\D^*[0,T];\dbR^n)$,
where $G(\cd)$ is given by the monotonicity condition \rf{MC-3}.
\end{lemma}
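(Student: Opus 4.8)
The plan is to collapse the non-local system \rf{linear} onto the diagonal $\{(t,t):t\in[0,T]\}$, where I expect it to reduce to an ordinary affine FBDE of the type \rf{FBDE}, and then to lift the diagonal solution back to a full solution on $\D_*[0,T]$ and $\D^*[0,T]$. First I would solve the backward equation explicitly: for each fixed $t$, integrating $\cY_s(t,s)=-g_0(t,s)$ on $\D^*[0,T]$ from $s$ to $T$ against $\cY(t,T)=\cX(t,t)+h_0(t)$ gives $\cY(t,s)=\cX(t,t)+h_0(t)+\int_s^Tg_0(t,r)dr$, so the diagonal value is $\cY(t,t)=\cX(t,t)+\ti h(t)$ with $\ti h(t):=h_0(t)+\int_t^Tg_0(t,r)dr$ continuous. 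Next I would set $p(t):=\cX(t,t)$ and $q(t):=\int_t^T\cY(r,r)dr+G(\cX(T,T))$; the forward equation then reads $\cX_s(t,s)=-q(s)+f_0(t,s)$, and integrating from $0$ with $\cX(t,0)=x(t)$ yields $\cX(t,s)=x(t)+\int_0^sf_0(t,\sigma)d\sigma-\int_0^sq(\sigma)d\sigma$ on $\D_*[0,T]$. Evaluating on the diagonal and using $\cY(r,r)=p(r)+\ti h(r)$ and $\cX(T,T)=p(T)$, I obtain the closed system
$$\left\{\ba&p(t)=\ti x(t)-\int_0^tq(\sigma)d\sigma,\\&q(t)=G(p(T))+\ti\psi(t)+\int_t^Tp(r)dr,\ea\right.\qq t\in[0,T],$$
where $\ti x(t):=x(t)+\int_0^tf_0(t,\sigma)d\sigma$ and $\ti\psi(t):=\int_t^T\ti h(r)dr$ are continuous and $\ti\psi(T)=0$. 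This is an FBDE with forward drift $a(p,q)=-q$, backward drift $b(p,q)=p$, terminal map $G(\cd)$, plus continuous inhomogeneities.

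Then I would prove solvability of this reduced FBDE via the method of continuation. The state/costate-dependent part of the drift satisfies
$$\lan a(p_1,q_1)-a(p_2,q_2),\,q_1-q_2\ran-\lan b(p_1,q_1)-b(p_2,q_2),\,p_1-p_2\ran=-|q_1-q_2|^2-|p_1-p_2|^2,$$
so the interior monotonicity \rf{FBDE-MC} holds with strictly positive constants, while \rf{MC-3} gives $\lan G(p_1)-G(p_2),\,p_1-p_2\ran\ges|G^{1\over 2}_0(p_1-p_2)|^2\ges0$, i.e. the terminal part is monotone with a possibly vanishing constant. This is the point at which \cite{Peng-Wu1999} must be slightly adapted: one uses the version of the continuation argument in which strict interior monotonicity in both $p$ and $q$ compensates for $G_0$ being only $\ges0$. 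The other small deviation — that $\ti x,\ti\psi$ are merely continuous rather than in integral form — is harmless, since these terms cancel out of every monotonicity estimate (one may run the continuation on the integral-form system directly, or smooth and pass to the limit). Hence the reduced FBDE has a unique solution $(p,q)\in C([0,T];\dbR^n)\times C([0,T];\dbR^n)$.

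Finally I would reconstruct $(\cX,\cY)$ by inserting this $(p,q)$ into the formulas above, i.e. $\cX(t,s):=x(t)+\int_0^sf_0(t,\sigma)d\sigma-\int_0^sq(\sigma)d\sigma$ on $\D_*[0,T]$ and $\cY(t,s):=p(t)+h_0(t)+\int_s^Tg_0(t,r)dr$ on $\D^*[0,T]$. Continuity of $p$, $q$, $\ti h$ and of the data gives $\cX\in C(\D_*[0,T];\dbR^n)$ and $\cY\in C(\D^*[0,T];\dbR^n)$, and a direct check shows $\cX(t,t)=p(t)$, $\int_t^T\cY(r,r)dr+G(\cX(T,T))=q(t)$, and that all three relations of \rf{linear} are satisfied; uniqueness follows from Proposition \ref{prop:uniqueness} (or from uniqueness of $(p,q)$ and the explicitness of the lift).

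I expect the main obstacle to be the reduction step itself: recognizing that the two-argument, non-local structure genuinely degenerates to a one-time-variable FBDE on the diagonal, and then pinning down the precise (slightly extended) form of the Peng--Wu theorem that tolerates both the degenerate terminal condition $G_0\ges0$ and the only-continuous inhomogeneous terms produced by the $t$-dependence of $f_0,g_0,h_0$. Everything after that — the monotonicity computation, the continuity of the reconstructed pair, and the verification that it solves \rf{linear} — is routine bookkeeping.
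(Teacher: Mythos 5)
Your proposal is correct and follows essentially the same route as the paper: the substitution $p(t)=\cX(t,t)$, $q(t)=\int_t^T\cY(r,r)dr+G(\cX(T,T))$ is exactly the paper's reduction to a diagonal FBDE solvable by Peng--Wu, followed by the same explicit lift back to $\D_*[0,T]\times\D^*[0,T]$. The only divergence is bookkeeping for merely continuous $x(\cd),f_0(\cd,\cd)$: the paper first differentiates smooth data to land literally in the Peng--Wu setting and then mollifies, using a monotonicity-based Cauchy estimate (its Step 2), while you keep the non-integrated inhomogeneity $\ti x$ and invoke a mildly extended continuation argument (or the same smoothing), which is a legitimate alternative since the inhomogeneity cancels in all difference estimates.
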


\begin{proof}
One can easily check that \rf{linear} satisfies the non-local monotonicity condition \rf{MC-1}.
Thus, by \autoref{prop:uniqueness}, it admits at most one solution.
For the  existence of a solution to \rf{linear}, we divide the proof into two steps.

\ms

\emph{Step 1.} We first consider the case with the smooth
coefficient $f_0(\cd,\cd)$ and
smooth initial value $x(\cd)$. By Peng and Wu \cite{Peng-Wu1999},
we know that the following FBDE admits a unique solution $(X(\cd),Y(\cd))$:
\bel{lemm:alpha=0-p1}\left\{\ba
&dX(t)=\[-Y(t)+f_0(t,t)+x^{\prime}(t)+\int_0^t \frac{\partial f_0(t,r)}{\partial t}dr\]dt, \q t\in[0,T],\\
&-dY(t)=\[X(t)+h_0(t)+\int_t^T g_0(t,r)dr\]dt,\q t\in[0,T],\\
&X(0)=x(0),\q Y(T)=G(X(T)).\ea\right.
\ee
Let $\cX(t,t)=X(t)$ and $\cY(t,t)=-{d Y(t)\over dt}$, then from the above we have
\begin{align*}
&\cX(t,t)=x(0)+\int_0^t \[-Y(s)+f_0(s,s)+x^{\prime}(s)+\int_0^s\frac{\partial f_0(s,r)}{\partial s}dr\]ds\\
&\q=x(t)-\int_0^t  Y(s)ds+\int_0^t f_0(s,s)ds+\int_0^t\int_r^t\frac{\partial f_0(s,r)}{\partial s}dsdr\\
&\q=x(t)+\int_0^t f_0(t,s)ds-\int_0^t \[ Y(T)-\int _s^T{dY(r)\over dr}dr\] ds\\
&\q=x(t)+\int_0^t  \[ f_0(t,s)-G(\cX(T,T))-\int _s^T\cY(r,r)dr\] ds,\q t\in[0,T],
\end{align*}
and
$$\cY(t,t)=\cX(t,t)+h_0(t)+\int_t^T g_0(t,r)dr,\q t\in[0,T].$$
Further, we define
\begin{align*}
\cX(t,s)&=x(t)+\int_0^s  \[ f_0(t,r)-G(\cX(T,T))-\int _r^T\cY(\t,\t)d\t\] dr,\q (t,s)\in\D_*[0,T],\\
\cY(t,s)&=\cX(t,t)+h_0(t)+\int_s^T g_0(t,r)dr,\q (t,s)\in\D^*[0,T],
\end{align*}
which, clearly, is a solution to  \rf{linear}.

\ms

\emph{Step 2.} We next consider the case with the
coefficient $f_0(\cd,\cd)$ and initial value $x(\cd)$ being continuous.
By the standard mollification method, we can find a sequence of smooth functions
$\{f_{0,n}(\cd,\cd),x_n(\cd)\}_{n>0}$ to uniformly converge to $f_0(\cd,\cd)$ and $x(\cd)$.
By the results obtained in Step 1, we know that the following equation
admits a unique solution $(\cX_n(\cd,\cd),\cY_n(\cd,\cd))$ for any given $n>0$:
\bel{X-m,n}\left\{\ba
&{d \cX_n(t,s)\over ds}=- \int_s^T \cY_n(r,r)dr-G(\cX_n(T,T))+f_{0,n}(t,s),\q (t,s)\in\D_*[0,T], \\
&{d\cY_n(t,s)\over ds}=-g_0(t,s),\q (t,s)\in\D^*[0,T],\\
&\cX_n(t,0)=x_n(t),\q \cY_n(t,T)=\cX_n(t,t)+h_0(t),\q t\in[0,T].\ea\right.
\ee
Denote $\cX_{m,n}(\cd,\cd)=\cX_m(\cd,\cd)-\cX_n(\cd,\cd)$,
$\cY_{m,n}(\cd,\cd)=\cY_m(\cd,\cd)-\cY_n(\cd,\cd)$,
$x_{m,n}(\cd)=x_m(\cd)-x_n(\cd)$,  $f_{0,m,n}(\cd,\cd)=f_{0,m}(\cd,\cd)-f_{0,n}(\cd,\cd)$ and
$\h G(\cX_{m,n}(T,T))=G(\cX_{m}(T,T))-G(\cX_{n}(T,T))$ for any given $m,n>0$.
Then,
$$\left\{\ba
&{d \cX_{m,n}(t,s)\over ds}=- \int_s^T \cY_{m,n}(r,r)dr-\h G(\cX_{m,n}(T,T))+f_{0,m,n}(t,s), \q (t,s)\in\D_*[0,T],\\
&{d \cY_{m,n}(t,s)\over ds}=0,\q (t,s)\in\D^*[0,T],\\
&\cX_{m,n}(t,0)=x_{m,n}(t),\q \cY_{m,n}(t,T)=\cX_{m,n}(t,t),\q t\in[0,T].\ea\right.$$
Noting that $\cY_{m,n}(t,t)=\cX_{m,n}(t,t)$ for any $t\in[0,T]$, we get
\begin{align*}
&{d\over dt}\[\int_t^T \lan \cX_{m,n}(s,t),\cY_{m,n} (s,s)\ran ds+\lan \h G(\cX_{m,n}(T,T)),\cX_{m,n}(T,t)\ran\]\\
%
%
&\q=- \lan \cX_{m,n}(t,t),\cX_{m,n} (t,t)\ran+\int_t^T\Big \lan \cY_{m,n} (s,s) , \[f_{0,m,n}(s,t)- \int_t^T \cY_{m,n}(r,r)dr\\
&\qq-\h G(\cX_{m,n}(T,T))\]\Big\ran ds+\Big\lan\h G(\cX_{m,n}(T,T)),\[f_{0,m,n}(T,t)- \int_t^T \cY_{m,n}(r,r)dr\\
&\qq-\h G(\cX_{m,n}(T,T))\]\Big\ran\\
&\q=- \lan \cX_{m,n}(t,t),\cX_{m,n} (t,t)\ran-\Big\lan\h G(\cX_{m,n}(T,T))+\int_t^T \cY_{m,n} (s,s)ds,\,\\
&\qq\q \h G(\cX_{m,n}(T,T))+\int_t^T \cY_{m,n} (s,s)ds\Big\ran+\int_t^T\lan \cX_{m,n}(s,s),\,f_{0,m,n}(s,t)\ran ds\\
&\qq+\lan\h G(\cX_{m,n}(T,T)),f_{0,m,n}(T,t)\ran,\q t\in[0,T].
\end{align*}
Further, noting $\cX_{m,n}(t,0)=x_{m,n}(t)$ for any $t\in[0,T]$, we have
\begin{align*}
&\lan\h G(\cX_{m,n}(T,T)),\cX_{m,n}(T,T)\ran-\int_0^T \lan x_{m,n}(s),\cX_{m,n} (s,s)\ran ds-\lan \h G(\cX_{m,n}(T,T)), x_{m,n}(T)\ran\\
&\q=-\int_0^T\[ \big|\cX_{m,n}(t,t)\big|^2+\Big| \h G(\cX_{m,n}(T,T))+\int_t^T \cY_{m,n} (s,s)ds\Big|^2\]dt\\
&\qq\, +\int_0^T\int_t^T\lan \cX_{m,n}(s,s),\,f_{0,m,n}(s,t)\ran dsdt +\int_0^T\lan \h G(\cX_{m,n}(T,T)),f_{0,m,n}(T,t)\ran dt.
\end{align*}
By Young's inequality, from \rf{MC-3}, the above implies that
\begin{align*}
&| G_0^{1\over 2}\cX_{m,n}(T,T)|^2 +\int_0^T\[ \big|\cX_{m,n}(t,t)\big|^2+\Big|  \h G(\cX_{m,n}(T,T))+\int_t^T \cY_{m,n} (s,s)ds\Big|^2\]dt\\
&\q\les \e \int_0^T \big|\cX_{m,n}(t,t)\big|^2dt +\e| G_0^{1\over 2}\cX_{m,n}(T,T)|^2
+C_\e \int_0^T \big|x_{m,n}(t)\big|^2dt\\
&\qq +C_\e| x_{m,n}(T)|^2+C_\e \int_0^T\int_t^T \big|f_{0,m,n}(s,t)\big|^2dsdt+C_\e\int_0^T | f_{0,m,n}(T,t)|^2dt,
\end{align*}
which yields that
\begin{align*}
&| G_0^{1\over 2}\cX_{m,n}(T,T)|^2 +\int_0^T\[ \big|\cX_{m,n}(t,t)\big|^2+\Big|  \h G(\cX_{m,n}(T,T))+\int_t^T \cY_{m,n} (s,s)ds\Big|^2\]dt\\
&\q\les C \[\int_0^T \big|x_{m,n}(t)\big|^2dt+| x_{m,n}(T)|^2+\int_0^T\int_t^T \big|f_{0,m,n}(s,t)\big|^2dsdt\\
&\qq\qq+\int_0^T | f_{0,m,n}(T,t)|^2dt\].
\end{align*}
Then noting that
$$| \h G(\cX_{m,n}(T,T))|\les K |G_0^{1\over 2}\cX_{m,n}(T,T)|,$$
by the standard results of VIEs, we have
\begin{align*}
\sup_{t,s}|\cX_{m,n}(t,s)|^2&\les  C \[\int_0^T \big|x_{m,n}(t)\big|^2dt+| x_{m,n}(T)|^2+\sup_t\int_0^t \big|f_{0,m,n}(t,s)\big|^2ds\\
&\qq+\int_0^T | f_{0,m,n}(T,t)|^2dt+\sup_t|x_{m,n}(t)|^2\].\end{align*}
Thus, $\{\cX_n(\cd,\cd)\}_{n>0}$ is a Cauchy sequence in the space of continuous functions.
With the fact that $\cY_{m,n}(t,s)=\cX_{m,n}(t,t)$, we know that $\{\cY_n(\cd,\cd)\}_{n>0}$
is also a Cauchy sequence in the space of continuous functions.
Let $(\cX(\cd,\cd),\cY(\cd,\cd))$ be the limit of $\{\cX_n(\cd,\cd),\cY_n(\cd,\cd)\}_{n>0}$ as $n\to\infty$.
Taking $n\to\infty$ in \rf{X-m,n}, we know that  $(\cX(\cd,\cd),\cY(\cd,\cd))$ is a solution to \rf{linear}.
\end{proof}

From the proof, we can see that \rf{linear} is essentially an FBDE.
Next, we consider the following family of FBVIEs parameterized by $\a \in[0,1]$:
\bel{para}\left\{\ba
\cX_s(t,s)&=\a f\Big(t,s,\cX(s,s),\cY(s,s), \int_s^T K(s,r)\cY(r,r)dr,\cX(T,T)\Big)\\
&\q+(1-\a)\[- \int_s^T \cY(r,r)dr-G(\cX(T,T))\]+f_0(t,s),\\
& \q\qq (t,s)\in\D_*[0,T],\\
\cY_s(t,s)&=-\big[ \a g(t,s,\cX(s,s),\cY(s,s))+g_0(t,s) \big],\q (t,s)\in\D^*[0,T],\\
\cX(t,0)&=x(t),\q t\in[0,T],\\
 \cY(t,T)&=\a h(t,\cX(t,t),\cX(T,T))+(1-\a)\cX(t,t)+h_0(t),\q t\in[0,T],\ea\right.\ee
where $f_0(\cd,\cd)$, $g_0(\cd,\cd)$, and $h_0(\cd)$ are continuous functions.
Clearly, when $\a=1$, the existence of the solution of \rf{para} implies that of \rf{FBVIE-main1}.

\begin{lemma}\label{lem:con}
Let {\rm \ref{ass:A1}} and {\rm \ref{ass:A2}} hold.
We assume that, for a given $\a_0\in[0,1)$ and for any $f_0(\cd,\cd),g_0(\cd,\cd),h_0(\cd)$,
\rf{para} admits a unique solution. Then there exists a constant $\d_0\in(0,1)$, such that for all
$\a \in[\a_0,\a_0+\d_0]$, and for any $f_0(\cd,\cd),g_0(\cd, \cd), h_0(\cd)$,
\rf{para} admits a unique solution.
\end{lemma}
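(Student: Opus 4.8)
The plan is to run the classical continuation argument: fix $\a_0\in[0,1)$ for which \rf{para} is uniquely solvable for all data $(f_0,g_0,h_0)$, pick $\d>0$ to be specified, and for $\a=\a_0+\d$ rewrite \rf{para} as a fixed-point problem in which the "base" operator at parameter $\a_0$ is inverted. Concretely, given an input pair $(\bar\cX(\cd,\cd),\bar\cY(\cd,\cd))\in C(\D_*[0,T];\dbR^n)\times C(\D^*[0,T];\dbR^n)$, I would define $(\cX,\cY)$ as the unique solution of \rf{para} at parameter $\a_0$ but with the extra forcing terms obtained by freezing the $\d$-increment of the coefficients at $(\bar\cX,\bar\cY)$; that is, the new $f_0$-type term becomes $f_0(t,s)+\d\big[f(t,s,\bar\cX(s,s),\dots)-(-\int_s^T\bar\cY(r,r)dr-G(\bar\cX(T,T)))\big]$, and similarly for the $g_0$ and $h_0$ slots. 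This defines a map $\Phi_\d:(\bar\cX,\bar\cY)\mapsto(\cX,\cY)$ whose fixed points are exactly the solutions of \rf{para} at $\a_0+\d$. Uniqueness of the solution at $\a_0+\d$ for all data then follows from Proposition \ref{prop:uniqueness} once existence is in hand.

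The heart of the matter is an a priori estimate showing $\Phi_\d$ is a contraction for $\d$ small, uniformly in $\a_0$. Here I would apply the chain rule to the bridge functional \rf{FBVIE-Ito} for the \emph{difference} of two iterates: let $(\cX^i,\cY^i)=\Phi_\d(\bar\cX^i,\bar\cY^i)$ for $i=1,2$, write $\h\cX=\cX^1-\cX^2$, $\h\cY=\cY^1-\cY^2$, and likewise $\h{\bar\cX},\h{\bar\cY}$. Differentiating
$$
t\mapsto \int_t^T\lan\h\cX(s,t),\h\cY(s,s)\ran ds+\lan \h G(\cX^1(T,T),\cX^2(T,T)),\h\cX(T,t)\ran
$$
and using that both $(\cX^i,\cY^i)$ solve the $\a_0$-system, the terms coming from the $\a_0$-part of the coefficients are handled by the non-local monotonicity condition \rf{MC-1} (this is exactly the computation already performed in the proof of Proposition \ref{prop:uniqueness} and in Step 2 of Lemma \ref{lemm:alpha=0}), producing a good sign $-\g\int_0^T|\h\cX(t,t)|^2dt-|G_0^{1/2}\h\cX(T,T)|^2$ on the left, while the remaining terms are the $\d$-increments evaluated at $(\h{\bar\cX},\h{\bar\cY})$ and are therefore $O(\d)$ in the appropriate norms, absorbable by Young's inequality. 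This yields
$$
|G_0^{1/2}\h\cX(T,T)|^2+\int_0^T|\h\cX(t,t)|^2dt\les C\d^2\Big(\sup_{t,s}|\h{\bar\cX}(t,s)|^2+\sup_{t,s}|\h{\bar\cY}(t,s)|^2\Big),
$$
with $C$ independent of $\a_0$ (the Lipschitz bound $L$ from \ref{ass:A2} and the constants $\g,K$ from \ref{ass:A1} are all that enter). Feeding this back through the variation-of-constants formula for the (decoupled-in-structure, $\a_0$-type) forward and backward equations — exactly as in the closing paragraphs of the proof of Lemma \ref{lemm:alpha=0} — upgrades it to a sup-norm estimate $\sup|\h\cX|^2+\sup|\h\cY|^2\les C\d^2(\sup|\h{\bar\cX}|^2+\sup|\h{\bar\cY}|^2)$. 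Choosing $\d_0\in(0,1)$ with $C\d_0^2<1$ makes $\Phi_\d$ a contraction on $C(\D_*[0,T];\dbR^n)\times C(\D^*[0,T];\dbR^n)$ for every $\d\in(0,\d_0]$, hence it has a unique fixed point, i.e. \rf{para} at $\a_0+\d$ has a (unique) solution for all data.

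The step I expect to be delicate is keeping $\d_0$ genuinely \emph{independent of $\a_0$}: one must check that the monotonicity computation for the $\a_0$-system retains the same coercivity constant $\g$ for every $\a_0\in[0,1)$ — which it does, since the convex combination $\a_0(\text{monotone part})+(1-\a_0)(\text{the }-\int\cY-G\text{ part})$ and the mixed boundary condition $\a_0 h+(1-\a_0)\,\cX(t,t)$ each satisfy \rf{MC-1} with a $\g$ not worse than the given one, so their average does too — and that the "$\d$-increment" terms are controlled purely by $L$ and $K$, not by anything $\a_0$-dependent. Once that uniformity is verified, the single $\d_0$ works for all $\a_0$, and the lemma follows; iterating it finitely many times (as will be done after this lemma) bridges from $\a=0$, where Lemma \ref{lemm:alpha=0} applies, to $\a=1$.
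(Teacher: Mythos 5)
Your proposal follows essentially the same route as the paper: define the map by freezing the $\d$-increment at the input pair and solving the $\a_0$-system with augmented forcing, apply the chain rule to the bridge functional \rf{FBVIE-Ito} for the difference of two outputs, use the non-local monotonicity condition \rf{MC-1} on the $\a_0$- and $(1-\a_0)$-parts plus Young's inequality on the $\d$-terms, upgrade to sup-norm bounds via standard F/BVIE estimates, and choose $\d_0$ uniform in $\a_0$. The only minor quibble is quantitative: since the cross terms involving $\h\cY$ must be routed through the backward-equation estimate, the contraction factor comes out as $C\d$ (as in the paper, with $\d_0=1/(2C)$) rather than $C\d^2$, which does not affect the validity of the argument.
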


\begin{proof}
Note that for each $f_0(\cd,\cd)$, $g_0(\cd,\cd)$, $h_0(\cd)$, and $\a_0\in[0,1)$,
\rf{para} admits a unique solution. Then, for each pair $(\f(\cd,\cd),\p(\cd,\cd))$,
there exists a unique  pair $(\cX(\cd,\cd),\cY(\cd,\cd))$ satisfying the following FBVIE:
$$
\left\{\ba
\cX_s(t,s)&=\a_0 f\Big(t,s,\cX(s,s),\cY(s,s), \int_s^T K(s,r)\cY(r,r)dr,\cX(T,T)\Big)\\
&\q+(1-\a_0)\[- \int_s^T \cY(r,r)dr-G(\cX(T,T))\]\\
&\q+\d f\Big(t,s,\f(s,s),\p(s,s), \int_s^T K(s,r)\p(r,r)dr,\f(T,T)\Big)\\
&\q+\d \[\int_s^T \phi(r,r)dr+G(\f(T,T))\]+f_0(t,s),\q (t,s)\in\D_*[0,T], \\
\cY_s(t,s)&=-\a_0  g(t,s,\cX(s,s),\cY(s,s))-\d  g(t,s,\f(s,s),\p(s,s))-g_0(t,s),\\
&\qq\qq\qq (t,s)\in\D^*[0,T],\\
\cX(t,0)&=x(t),\q t\in[0,T],\\
\cY(t,T)&=\a_0 h(t,\cX(t,t),\cX(T,T))+(1-\a_0)\cX(t,t)\\
&\q+\d\[h(t,\f(t,t),\f(T,T))-\f(t,t)\]+h_0(t),\q t\in[0,T].\ea\right.
$$

\ms

\ms

We are going to prove that the mapping $\G_{\a_0+\d}[\cd,\cd]$, defined by
\begin{align*}
&\G_{\a_0+\d}[\f(\cd,\cd),\p(\cd,\cd)]=(\cX(\cd,\cd),\cY(\cd,\cd)),\\
&\qq \forall (\f(\cd,\cd),\p(\cd,\cd))\in C(\D_*[0, T] ;\dbR^n) \times C(\D^*[0, T] ;\dbR^n),
\end{align*}
is a contraction.
Let $(\f_i(\cd,\cd),\p_i(\cd,\cd))\in C(\D_*[0, T] ;\dbR^n) \times C(\D^*[0, T] ;\dbR^n)$, and
$$
(\cX_i(\cd,\cd),\cY_i(\cd,\cd))=\G_{\a_0+\d}[\f_i(\cd,\cd),\p_i(\cd,\cd)],\q \hbox{for } i=1,2.
$$
Denote $(\h\f(\cd,\cd), \h\p(\cd,\cd))=(\f_1(\cd,\cd)-\f_2(\cd,\cd),\p_1(\cd,\cd)-\p_2(\cd,\cd))$ and
$(\h\cX(\cd,\cd), \h\cY(\cd,\cd))=(\cX_1(\cd,\cd)-\cX_2(\cd,\cd),\cY_1(\cd,\cd)-\cY_2(\cd,\cd))$.
Similar to \rf{MC-2}, we can denote
\begin{align*}
&\h h(t;\f),\q \h f(t,s;\f,\p),\q\h g(t,s;\f,\p), \q\hbox{and}\\
& \h h(t;\cX),\q \h f(t,s;\cX,\cY),\q\h g(t,s;\cX,\cY),
\end{align*}
with $(x_i(\cd),y_i(\cd))$ replaced by $(\f_i(\cd,\cd),\p_i(\cd,\cd))$
and $(\cX_i(\cd,\cd),\cY_i(\cd,\cd))$, respectively. Next, we denote
$\h G(\h\cX(T,T))=G(\cX_1(T,T))-G(\cX_2(T,T))$ and $\h G(\h\f(T,T))=G(\f_1(T,T))-G(\f_2(T,T))$. Then,
$$
\left\{\ba
{d \h\cX(t,s)\over ds}&=\a_0 \h f(t,s;\cX,\cY)+(1-\a_0)\[- \int_s^T \h\cY(r,r)dr-\h G(\h\cX(T,T))\]\\
&\q+\d \h f(t,s;\f,\p)+\d \[\int_s^T \h\phi(r,r)dr+\h G(\h\f(T,T))\], \q (t,s)\in\D_*[0,T],\\
{d\h\cY(t,s)\over ds}&=-\a_0 \h g(t,s;\cX,\cY)-\d \h g(t,s;\f,\p),\q (t,s)\in\D^*[0,T],\\
\h\cX(t,0)&=0,\q t\in[0,T],\\
\h\cY(t,T)&=\a_0\h h(t;\cX)+(1-\a_0)\h\cX(t,t)+\d[\h h(t;\f)-\h\f(t,t)],\q t\in[0,T].\ea\right.
$$
By the same argument as that employed in the proof of \autoref{prop:uniqueness}, we have
\begin{align*}
&{d\over dt}\[\int_t^T \lan \h\cY(s,s),\h\cX(s,t)\ran ds+\lan\h G(\h\cX(T,T)),\h\cX(T,t)\ran\]\\
%
%
&\q=-\Big\lan  \a_0\h h(t;\cX)+(1-\a_0)\h\cX(t,t)+\d[\h h(t;\f)-\h\f(t,t)]\\
&\qq\qq\qq+\int_t^T [\a_0 \h g(t,s;\cX,\cY)+\d \h g(t,s;\f,\p)]ds,\, \h\cX(t,t)\Big\ran\\
&\qq+\int_t^T\Big\lan \h\cY(s,s),\,\a_0 \h f(s,t;\cX,\cY)+(1-\a_0)\[- \int_t^T \h\cY(r,r)dr-\h G(\h\cX(T,T))\]\\
&\qq\qq\qq +\d \h f(s,t;\f,\p)+\d \[\int_t^T \h\phi(r,r)dr+\h G(\h\f(T,T))\]\Big\ran ds\\
&\qq+\Big\lan \h G(\h\cX(T,T)),\,\a_0 \h f(T,t;\cX,\cY)+(1-\a_0)\[- \int_t^T \h\cY(r,r)dr-\h G(\h\cX(T,T))\]\\
&\qq\qq\qq +\d \h f(T,t;\f,\p)+\d \[\int_t^T \h\phi(r,r)dr+\h G(\h\f(T,T))\]\Big\ran,\q t\in[0,T].
\end{align*}
Then from the non-local monotonicity condition \rf{MC-1}, we have
\begin{align*}
&{d\over dt}\[\int_t^T \lan \h\cY(s,s),\h\cX(s,t)\ran ds+\lan\h G(\h\cX(T,T)),\h\cX(T,t)\ran\]\\
&\q\les-\d\Big\lan  \h h(t;\f)-\h\f(t,t)+\int_t^T  \h g(t,s;\f,\p)ds,\, \h\cX(t,t)\Big\ran\\
&\qq+\d \int_t^T\Big\lan \h\cY(s,s),\, \h f(s,t;\f,\p)+\int_t^T \h\phi(r,r)dr+\h G(\h\f(T,T))\Big\ran ds\\
&\qq+\d\Big\lan\h G(\h\cX(T,T)),\,\h f(T,t;\f,\p)+\int_t^T \h\phi(r,r)dr+\h G(\h\f(T,T))\Big\ran\\
&\qq -\a_0 \g|\h\cX(t,t)|^2-(1-\a_0)\[|\h\cX(t,t)|^2+\Big|\int_t^T \h\cY(r,r)dr+\h G(\h\cX(T,T))\Big|^2\],\q t\in[0,T].
\end{align*}
Thus, noting $\h\cX(\cd,0)\equiv 0$ and $\g>0$, by \rf{MC-3}, we get
\begin{align*}
&\big| G_0^{1\over 2}\h\cX(T,T)\big|^2+\int_0^T|\h\cX(t,t)|^2dt\\
&\q\les \d C\int_0^T \bigg\{\Big|\Big\lan  \h h(t;\f)-\h\f(t,t)+\int_t^T  \h g(t,s;\f,\p)ds,\, \h\cX(t,t)\Big\ran\Big|\\
&\qq\qq+\Big|\int_t^T\Big\lan \h\cY(s,s),\, \h f(s,t;\f,\p)+\int_t^T \h\phi(r,r)dr+\h G(\h\f(T,T))\Big\ran ds\Big|\\
&\qq\qq+\Big|\Big\lan \h G(\h \cX(T,T)),\,\h f(T,t;\f,\p)+\int_t^T \h\phi(r,r)dr+\h G(\h\f(T,T))\Big\ran\Big|\bigg\}dt\\
&\q\les {1\over 2} |G_0^{1\over2}\h\cX(T,T)|^2+{1\over 2}\int_0^T|\h\cX(t,t)|^2dt+\d C\int_0^T|\h\cY(t,t)|^2dt\\
&\qq +\d C|\h\f(T,T)|^2+ \d C\int_0^T \bigg\{|  \h h(t;\f)|^2+|\h\phi(t,t)|^2+|\h f(T,t;\f,\p)|^2+|\h\f(t,t)|^2\\
&\qq\qq+\int_t^T\big[ | \h g(t,s;\f,\p)|^2 +|\h f(s,t;\f,\p)|^2\big]ds\bigg\}dt,
\end{align*}
where the last inequality is obtained by applying the Young's inequality.
Thus,
\begin{align*}
&\big| G_0^{1\over 2}\h\cX(T,T)\big|^2+\int_0^T|\h\cX(t,t)|^2dt\\
&\q\les \d C\int_0^T|\h\cY(t,t)|^2dt +\d C|\h\f(T,T)|^2+ \d C\int_0^T \big[|\h\f(t,t)|^2+|\h\p(t,t)|^2\big]dt.
\end{align*}
By the standard results of BVIEs, we have
\bel{Prop:con-p1}
\begin{aligned}
&\int_0^T |\h\cY(t,t)|^2dt \les\d  C\[\int_0^T\(|\h\f(t,t)|^2+|\h\p(t,t)|^2\)dt+|\h\f(T,T)|^2\]\\
&\qq +C\[\int_0^T|\h\cX(t,t)|^2dt+|G_0\h\cX(T,T)|^2\]\\
&\q\les  \d C\[\int_0^T\(|\h\f(t,t)|^2+|\h\p(t,t)|^2\)dt+|\h\f(T,T)|^2\]\\
&\qq +C\[\int_0^T|\h\cX(t,t)|^2dt+|G_0^{1\over 2}\h\cX(T,T)|^2\].
\end{aligned}
\ee
Combining the above two estimates together, we have
\begin{align*}
& |G_0^\frac{1}{2}\h\cX(T,T)|^2+\int_0^T|\h\cX(t,t)|^2dt \\
&\q\les \d C\[\int_0^T\(|\h\f(t,t)|^2+|\h\p(t,t)|^2\)dt+\int_0^T|\h\cX(t,t)|^2dt+|G_0^{1\over 2}\h\cX(T,T)|^2+|\h\f(T,T)|^2\],
\end{align*}
which implies that
\begin{align*}
& |G_0^\frac{1}{2}\h\cX(T,T)|^2+\int_0^T|\h\cX(t,t)|^2dt \\
&\q\les \d C\[\int_0^T\(|\h\f(t,t)|^2+|\h\p(t,t)|^2\)dt+|\h\f(T,T)|^2\].
\end{align*}
Substituting the above into \rf{Prop:con-p1}, we get
\begin{align*}
&|G_0^\frac{1}{2}\h\cX(T,T)|^2+\int_0^T\(|\h\cX(t,t)|^2+|\h\cY(t,t)|^2\)dt \\
&\q\les \d C\[\int_0^T\(|\h\f(t,t)|^2+|\h\p(t,t)|^2\)dt+|\h\f(T,T)|^2\].
\end{align*}
Then by the standard estimates of FVIEs, we have
\bel{Prop:con-p2}
\begin{aligned}
&\sup _{t,s} |\h\cX(t,s)|^2 \les \d C\[\int_0^T\(|\h\f(t,t)|^2+|\h\p(t,t)|^2\)dt+|\h\f(T,T)|^2\]\\
&\qq+C\[ |G_0^\frac{1}{2}\h\cX(T,T)|^2+\int_0^T|\h\cY(t,t)|^2dt\]\\
&\q\les \d C\[\int_0^T\(|\h\f(t,t)|^2+|\h\p(t,t)|^2\)dt+|\h\f(T,T)|^2\].
\end{aligned}
\ee
On the other hand, the standard estimate of BVIEs implies that
\bel{Prop:con-p3}
\begin{aligned}
&\sup _{t,s} |\h\cY(t,s)|^2 \les \d C\[\int_0^T|\h\p(t,t)|^2dt+\sup _{t} |\h\f(t,t)|^2\]+C\sup _{t} |\h\cX(t,t)|^2\\
&\q\les \d C\[\int_0^T|\h\p(t,t)|^2dt+\sup _{t} |\h\f(t,t)|^2\],
\end{aligned}
\ee
in which the last equality is due to \rf{Prop:con-p2}.
Combining the estimates \rf{Prop:con-p2} and \rf{Prop:con-p3} together, we get
\begin{align*}
\sup_{t,s}\[|\h\cX(t,s)|^2+|\h\cY(t,s)|^2\] \les \d C\sup_{t,s}\[|\h\f(t,s)|^2+|\h\p(t,s)|^2\].
\end{align*}
We now choose $\d_0={1\over 2C}$, which is independent of $\a_0$.
Clearly, for each fixed $\d\in [0,\d_0]$, the mapping $\G_{\a_0+\d}[\cd,\cd]$ is a contraction.
It turns out that this mapping has a unique fixed point $(\cX^{\a_0+\d}(\cd,\cd), \cY^{\a_0+\d}(\cd,\cd))$,
which is the unique solution of \rf{para} for $\a=\a_0+\d$.
\end{proof}

\ms

We  are ready to give the proof of \autoref{Thm:well-posedness} now.

\begin{proof}
From \autoref{lemm:alpha=0}, we see immediately that, when $\a=0$,
for any $f_0(\cd,\cd)$, $g_0(\cd,\cd)$, and $h_0(\cd)$, FBVIE \rf{para} admits a unique solution.
Then by  \autoref{lem:con}, for any $f_0(\cd,\cd)$, $g_0(\cd,\cd)$, and $h_0(\cd)$,
we can solve the FBVIE \rf{para} successively for the case $\a \in[0, \d_0],[\d_0, 2 \d_0], \ldots, [(N-1)\d_0, 1]$, with $(N-1)\d_0<1\les N\d_0$.
 It turns out that, when $\a=1$, for any  $f_0(\cd,\cd)$, $g_0(\cd,\cd)$, and $h_0(\cd)$,  \rf{para} admits a solution,
which implies that the solution of \rf{FBVIE-main1} exists.
The uniqueness of solutions to \rf{FBVIE-main1} is an immediate consequence of \autoref{prop:uniqueness}.
\end{proof}

\section{Examples} \label{sec:application}

In this section, we shall give two explicit examples of  coupled FBVIEs,
whose solvability can be obtained from \autoref{Thm:well-posedness}.

\begin{example}[Hamiltonian System Derived From Linear-Convex Optimal Control Problems]
Consider the controlled  Volterra integral equation:
$$
X(t)=x(t)+\int_0^t\[A(t, s) X(s)+B(t, s) u(s)\]ds,\q t \in[0,T],
$$
and the cost functional:
$$
J( u(\cd))=\int_0^T\[Q(t,X(t))+\lan R(t) u(t),u(t)\ran\] d t+M(X(T)).
$$
We assume that all the functions involved  above are smooth, and
\begin{align*}
&|Q(t,x)|\les L(1+|x|^2),\q|M(x)|\les L(1+|x|^2),\q \forall x\in\dbR^n, \\
&  R(t)\ges \d I_m,\q \lan Q_{x}(t,x_1)-Q_{x}(t,x_2),\,x_1-x_2\ran\ges \d |x_1-x_2|^2,\\
&|Q_{x}(t,x_1)-Q_{x}(t,x_2)|\les L|x_1-x_2|,\\
& \lan M_{x}(x_1)-M_{x}(x_2),\,x_1-x_2\ran\ges  |G_0^{1\over 2} (x_1-x_2)|^2,\\
&| M_{x}(x_1)-M_{x}(x_2)|\les L|G_0^{1\over 2} (x_1-x_2)|,\q \forall x_1,x_2\in\dbR^n,
\end{align*}
for some $\d>0$, $L>0$, and $G_0\ges 0$.
A typical example   is that
\bel{LQ-Form}
Q(t,x)=\lan Q(t)x,x\ran, \q \hbox{with } Q(t)\ges \d I_n>0, \q M(x)=\lan G_0 x,x\ran,
\ee
under which the problem has a linear-quadratic form.
In addition, if the state $X(\cd)$ is one-dimensional,  we can also take
$$
Q(t,x)={x^4-2\over x^2+1},
$$
by which the problem is out of the linear-quadratic framework.
The optimal control problem can be stated as follows:
Find a control $\bar u(\cd)\in L^2([0,T];\dbR^m)$ such that
$$
J(0, x(\cd);\bar u(\cd))=\inf_{u(\cd)\in L^2([0,T];\dbR^m)} J(0, x(\cd);u(\cd)).
$$

\ms

Clearly, this is the optimal control problem with the form \rf{LCONVEX} given in Introduction.
Then from \rf{LCONVEX1} and \rf{OS-1}, the optimal control $\bar u(\cd)$ can be written as:
\begin{align}\label{MP-condition}
\bar{u}(s)=-{1\over 2}R(s)^{-1}\int_s^T B(r,s)^\top Y(r)dr-{1\over 2}R(s)^{-1}B(T,s)^\top M_x( X(T)), \quad s \in[0, T],
\end{align}
with the Hamiltonian system
\bel{MP-system}\left\{\begin{aligned}
X(t)= & x(t)+\int_0^t \[A(t,s) X(s)-{1\over 2}B(t,s) R(s)^{-1}\int_s^T B(r,s)^\top Y(r)dr\\
&\qq-{1\over 2}B(t,s)R(s)^{-1}B(T,s)^\top M_x(X(T))\] ds, \\
Y(t)= & Q_x(t,X(t))+A(T,t)^\top M_x(X(T))+\int_t^T A(s,t)^{\top} Y(s)ds,\end{aligned}\right.
\q t\in[0,T].
\ee
Then,
\begin{align*}
&f(t,s,x,y,y^\prime,x^\prime)=A(t,s)x-{1\over 2}B(t,s) R(s)^{-1}y^\prime-{1\over 2}B(t,s)R(s)^{-1}B(T,s)^\top M_x(x^\prime),\\
& K(s,r)=B(r,s)^\top,\q g(t,s,x,y)=A(s,t)^\top y,\q h(t,x,x^\prime)=Q_x(t,x)+A(T,t)^\top M_x(x^\prime).
\end{align*}
For any $x_i(\cd),y_i(\cd)\in C([0,T];\dbR^n),\,i=1,2$,
denote $\h x(\cd)$, $\h y(\cd)$, $\h f(\cd,\cd)$,  $\h g(\cd,\cd)$, and   $\h h(\cd)$ as that in \rf{MC-2}.
Moreover, we take $G(\cd)=M_x(\cd)$, and denote
$$
\h G(T)=G (x_1(T))-G (x_2(T)),\q \h Q_x(t)=Q_x(t,x_1(t))-Q_x(t,x_2(t)).
$$
Clearly, \rf{MC-3} holds.
Then,
\begin{align}
&\int_t^T \big\lan \h y(s),\, \h f(s,t)\big\ran ds-\Big\lan \h h(t)+\int_t^T\h g(t,s)ds,\, \h x(t) \Big\ran+\big\lan\h G (T), \,\h f(T,t)\big\ran \nn\\
&\q=\int_t^T \bigg\lan \h y(s),\, A(s,t)\h x(t)-{1\over 2}B(s,t) R(t)^{-1}\int_t^TB(r,t)^\top \h y(r)dr\nn\\
&\qq\qq-{1\over 2}B(s,t)R(t)^{-1}B(T,t)^\top \h G (T)\bigg\ran ds\nn\\
&\qq-\Big\lan\h Q_x(t) +A(T,t)^\top \h G (T)+\int_t^T A(s,t)^\top\h y(s) ds,\, \h x(t) \Big\ran\nn\\
&\qq+\bigg\lan \h G (T), \,A(T,t)\h x(t)-{1\over 2}B(T,t) R(t)^{-1}\int_t^TB(r,t)^\top \h y(r)dr\nn\\
&\qq\qq -{1\over 2} B(T,t)R(t)^{-1} B(T,t)^\top\h G (T)\bigg\ran\nn\\
&\q =-\big\lan\h Q_x(t),\, \h x(t)\big\ran-{1\over 2}\Big\lan R(t)^{-1}\[B(T,t)^\top \h G (T)\nn\\
&\qq\,+\int_t^TB(r,t)^\top \h y(r)dr\],\, \[B(T,t)^\top \h G (T)+\int_t^TB(r,t)^\top \h y(r)dr\]\Big\ran\nn\\
&\q\les -\d |\h x(t)|^2,\q t\in[0,T],\nn
\end{align}
which implies the non-local monotonicity condition \rf{MC-1} holds.
Thus, from \autoref{Thm:well-posedness}, FBVIE \rf{MP-system} admits a unique solution.
It turns out that the control process $\bar u(\cd)$, given by \rf{MP-condition}, is the unique optimal control.
Moreover, if $Q(\cd,\cd)$ and $M(\cd)$ admit the quadratic form \rf{LQ-Form}, then the value function can be given by
$$
\begin{aligned}
&\int_0^T \lan \cX(s,0),\cY(s,s)\ran ds+\lan G_0\cX(T,0),\cX(T,T)\ran\\
&\q = \int_0^T \big[\lan Q(s)X(s),X(s)\ran +\lan R(s)\bar u(s),\bar u(s)\ran \big]ds+\lan G_0X(T),X(T)\ran,
\end{aligned}
$$
where $(\cX(\cd,\cd),\cY(\cd,\cd))$ satisfies the following auxiliary system corresponding to \rf{MP-system}:
$$
\left\{\begin{aligned}
&\cX_s(t,s)=  A(t,s)\cX(s,s)-{1\over 2}B(t,s)R(s)^{-1}\int_s^T B(r,s)^\top\cY(r,r)dr\\
&\qq- B(t,s)R(s)^{-1}B(T,s)^\top G_0\cX(T,T),\q (t,s)\in\D_*[0,T], \\
&\cY_s(t,s)=  - A(s,t)^{\top}\cY(s,s),\q (t,s)\in\D^*[0,T], \\
& \cX(t,0)=x(t),\q \cY(t,T)=2 Q(t)\cX(t,t)+2A(T,t)^\top G_0\cX(T,T),\q t\in[0,T].
\end{aligned}\right.
$$
\end{example}

The following is an FBVIE with some general nonlinear terms. We will show that under some proper assumptions,
it also satisfies the non-local monotonicity condition \rf{MC-1}--\rf{MC-2}.

\begin{example}[Nonlinear FBVIEs]
Consider the following nonlinear FBVIE:
\bel{FBVIE-Nonlinear}\left\{\begin{aligned}
X(t)= & x(t)+\int_0^t \[A(t,s) X(s)+B(t,s)a(s,X(s))\\
&\qq-B(t,s) \int_s^T B(r,s)^\top Y(r)dr\] ds, \\
Y(t)= & b(t,X(t))+\phi\Big(t,\int_t^T B(r,t)^\top Y(r)dr\Big)\\
&\qq+\int_t^T\[ A(s,t)^{\top} Y(s)+\psi(t,s,X(t))\] ds,\end{aligned}\right.\q t\in[0,T].
\ee
Assume that  all the coefficients of the above are continuous functions. Moreover, there exist constants $\l,L_a,L_b,L_\phi,L_\psi>0$ such that
\begin{align*}
&|a(s,x_1)-a(s,x_2)|\les L_a |x_1-x_2|,\q |b(s,x_1)-b(s,x_2)|\les L_b |x_1-x_2|,\\
& \lan b(s,x_1)-b(s,x_2) ,x_1-x_2\ran\ges\l |x_1-x_2|^2,\\
&|\psi(t,s,x_1)-\psi(t,s,x_2)|\les L_\psi |x_1-x_2|,\q \forall x_1,x_2\in\dbR^n;\\
&|\phi(s,y^\prime_1)-\phi(s,y^\prime_2)|\les L_\phi |y^\prime_1-y^\prime_2|,\q \forall y^\prime_1,y^\prime_2\in\dbR^n.
\end{align*}
We now show that under proper conditions, the above satisfies the monotonicity condition \rf{MC-1}.
Indeed,
\begin{align*}
&\int_t^T \big\lan \h y(s),\, \h f(s,t)\big\ran ds-\Big\lan \h h(t)+\int_t^T\h g(t,s)ds,\, \h x(t) \Big\ran\\
&\q =\int_t^T \Big\lan \h y(s),\, \[A(s,t)\h x(t)+B(s,t)[a(t,x_1(t))-a(t,x_2(t))]\\
&\qq -B(s,t) \int_t^T B(r,t)^\top \h y (r)dr\]\Big\ran ds-\Big\lan\h x(t),\, \[b(t,x_1(t))-b(t,x_2(t))\\
&\qq+\phi\Big(t,\int_t^T B(r,t)^\top y_1(r)dr\Big)-\phi\Big(t,\int_t^T B(r,t)^\top y_2(r)dr\Big)\\
&\qq +\int_t^T\( A(s,t)^{\top} \h y(s)+\psi(t,s,x_1(t))-\psi(t,s,x_2(t))\) ds\]\Big\ran\\
&\q = \Big\lan\int_t^T B(s,t)^\top\h y(s)ds,\, \big[a(t,x_1(t))-a(t,x_2(t))\big]\Big\ran\\
&\qq -\Big| \int_t^T B(r,t)^\top \h y (r)dr\Big|^2-\Big\lan\h x(t),\, \[b(t,x_1(t))-b(t,x_2(t))\\
&\qq+\phi\Big(t,\int_t^T B(r,t)^\top y_1(r)dr\Big)-\phi\Big(t,\int_t^T B(r,t)^\top y_2(r)dr\Big)\]\Big\ran\\
&\qq -\Big\lan\h x(t),\,\int_t^T\big[\psi(t,s,x_1(t))-\psi(t,s,x_2(t))\big] ds\Big\ran\\
&\q \les {1\over 2} \Big| \int_t^T B(r,t)^\top \h y (r)dr\Big|^2+ {1\over 2}L_a^2 |\h x(t)|^2-\Big| \int_t^T B(r,t)^\top \h y (r)dr\Big|^2\\
&\qq -\l|\h x(t)|^2+{1\over 2} \Big| \int_t^T B(r,t)^\top \h y (r)dr\Big|^2+ {1\over 2}L_\phi^2 |\h x(t)|^2+L_\psi T|\h x(t)|^2\\
&\q=-\Big(\l-{1\over 2}L_a^2 -{1\over 2}L_\phi^2-L_\psi T\Big) |\h x(t)|^2,\q t\in[0,T].
\end{align*}
Let
$$
\l-{1\over 2}L_a^2 -{1\over 2}L_\phi^2-L_\psi T>0,
$$
and take $G(\cd)\equiv0$.
Then the non-local monotonicity condition \rf{MC-1}--\rf{MC-2} holds.
One can  see that for the case with only one nonlinear term $b(\cd)$, that is $a(\cd),\phi(\cd),\psi(\cd)=0$,
we just need to assume that $\l>0$. Thus, by \autoref{Thm:well-posedness}, \rf{FBVIE-Nonlinear} admits a unique solution.
\end{example}

\section{Conclusion}

The main contribution of this paper is that we provide a method for finding a non-local monotonicity condition for
coupled forward-backward Volterra integral equations, under which the system admits a unique solution.
From this procedure, we can  see the celebrated method of continuation developed for establishing
the solvability of FBSDEs (see \cite{Hu-Peng1995,Yong1997,Peng-Wu1999})
is also an optimal control approach in some sense.
The stochastic version of coupled FBVIEs  has more potential in applications; for example,
it can be applied in the popular rough Heston model, and it can provide a probabilistic interpretation
for non-local PDEs. A natural problem is to extend the current result to
stochastic FBVIEs. Although  the main methods introduced in the current paper still work,
it becomes  more technical. In the current paper, we mainly focus on  the basic idea of finding
a non-local monotonicity condition, and do not hope to touch too many technical issues.
We will report the related results of coupled stochastic FBVIEs separately  in the near future.

\end{document}